\newtheorem{theorem}{Theorem}[section]
\newtheorem{corollary}[theorem]{Corollary}
\newtheorem{defi}{Definition}
\newtheorem{example}{Example}
\newtheorem{lemma}[theorem]{Lemma}
\newtheorem{proposition}[theorem]{Proposition}
\newcommand{\C}{\mathcal{C}}
\newcommand{\M}{\mathcal{M}}
\newcommand{\N}{\mathcal{N}}
\newcommand{\U}{\mathcal{U}}
\newcommand{\B}{\mathcal{B}}
\newcommand{\rk}{\operatorname{rank}}
\newcommand{\y}{\boldsymbol{y}}
\newcommand{\0}{\boldsymbol{0}}
\newcommand{\NN}{\mathcal{N}}
\newcommand{\ring}{\mathsf{R}}
\newcommand{\multiedge}{\mathsf{M}}
\newcommand{\unif}{\mathsf{U}}
\newcommand{\UMR}{\mathsf{UMR}}
\newcommand{\AR}{A_{\ring}}
\newcommand{\AM}{A_{\multiedge}}
\newcommand{\AU}{A_{\unif}}
\newcommand{\aU}{a_{\unif}}
\newcommand{\solAR}{\widehat{\AR}}
\newcommand{\solAU}{\widehat{\AU}}
\newcommand{\TU}{T_{\unif}}
\newcommand{\tU}{t_{\unif}}
\newcommand{\SR}{S_{\ring}}
\newcommand{\SM}{S_{\multiedge}}
\newcommand{\SU}{S_{\unif}}
\newcommand{\NR}{N_{\ring}}
\newcommand{\NM}{N_{\multiedge}}
\newcommand{\NU}{N_{\unif}}
\DeclareMathOperator{\mul}{\operatorname{Mul}}
\newcommand{\etalchar}[1]{$^{#1}$}
\newcommand\Defn[1]{\emph{\color{black}#1}}
\newcommand\1{\mathbf{1}}
\newcommand\Char[1]{\1_{#1}}
\DeclareMathOperator{\conv}{\mathrm{conv}}
\begin{document}
%\date{\today}

\title[Many 2-level polytopes from matroids]{Many 2-level polytopes from matroids}

\author{Francesco Grande}
\address{(FG) Freie Universit\"at Berlin, Institut f\"ur Mathematik und Informatik, Arnimallee 2, 14195 Berlin, Germany}
\email{fgrande@zedat.fu-berlin.de}
\urladdr{http://page.mi.fu-berlin.de/grande/}

\author{Juanjo Ru\'e}
\address{(JR) Freie Universit\"at Berlin, Institut f\"ur Mathematik und Informatik, Arnimallee 3, 14195 Berlin, Germany}
\email{jrue@zedat.fu-berlin.de}
\urladdr{http://www-ma2.upc.edu/jrue/}

\thanks{F.\,G.~was supported by the DFG within the research training group
\emph{Methods for Discrete Structures} (GRK1408).\\
\indent
J.\,R.~was partially supported by the Spanish MICINN grant MTM2011-22851, the FP7-PEOPLE-2013-CIG project CountGraph (ref. 630749), the DFG within the research training group \emph{Methods for Discrete Structures} (GRK1408), and the \emph{Berlin Mathematical School}.
}
\maketitle

%%%%%%%%%%%%%%%%%%%%%%%%%%%%%%%%%%%%%%%%%%%%%%%%%%%%%%%%%

						% Abstract

%%%%%%%%%%%%%%%%%%%%%%%%%%%%%%%%%%%%%%%%%%%%%%%%%%%%%%%%%

\begin{abstract}
%Definition of 2-level polytope.
The family of $2$-level matroids, that is, matroids whose base polytope is $2$-level, has been recently studied and characterized by means of combinatorial properties. $2$-level matroids generalize series-parallel graphs, which have been already successfully analyzed from the enumerative perspective.

We bring to light some structural properties of $2$-level matroids and exploit them for enumerative purposes. Moreover, the counting results are used to show that the number of combinatorially non-equivalent $(n {-} 1)$-dimensional 2-level polytopes is bounded from below by $c \cdot n^{-5/2} \cdot \rho^{-n}$, where $c\approx 0.03791727 $ and $\rho^{-1} \approx 4.88052854$.

\end{abstract}

%%%%%%%%%%%%%%%%%%%%%%%%%%%%%%%%%%%%%%%%%%%%%%%%%%%%%%%%%

						% INTRODUCTION

%%%%%%%%%%%%%%%%%%%%%%%%%%%%%%%%%%%%%%%%%%%%%%%%%%%%%%%%%

\section{Introduction}
A hyperplane $H$ is \Defn{facet-defining} for a polytope $P$ if it is supporting for $P$ and $\dim(P\cap H)=\dim(P)-1$. A \Defn{$2$-level polytope} is a polytope $P$ such that for each facet-defining hyperplane $H$, there exists a hyperplane $H'$ parallel to $H$ that contains all the vertices of $P$ not in $H$. The family of $2$-level polytopes appeared in the literature in different areas under different names: in \cite{Santos} they are called compressed polytopes and also show up in statistics (see \cite{Sullivant}).
In the context of combinatorial optimization \cite{Parrilo} and \cite{Laurent}, $2$-level polytopes are related to the so-called exact point configurations: the interest in these configurations is due to the fact that some techniques from polynomial optimization, namely semidefinite programming relaxations, are very efficient for these configurations. Furthermore	$2$-level polytopes play a role in the study of extremal centrally-symmetric polytopes \cite{SWZ}.

Two polytopes are \Defn{combinatorially equivalent} if their face lattices are isomorphic. It is known that all $2$-level $n$-dimensional polytopes are affinely equivalent to $0/1$-polytopes (polytopes with all vertices in $\{  0,1 \}^n$) and the number of combinatorially non-equivalent $0/1$-polytopes is doubly-exponential in the dimension (see \cite{Lecton01pol}). Among the finite number of $0/1$-polytopes of fixed dimension, it is natural to ask how many are $2$-level, up to combinatorial equivalence.

Though $2$-level polytopes are endowed with a very restrictive geometric property, this class is not well-understood and an exact enumeration seems to be complicated. It is easy to see that the $2$-levelness is preserved for some polytopal constructions: pyramid, prism, and Cartesian product.
Moreover some subfamilies of $2$-level polytopes are known: two of them are explored in \cite{Schmitt}, the so-called \Defn{Hansen polytopes} \cite{Hansen} and \Defn{Hanner polytopes} \cite{Hanner}, while a third one arises from stable sets of perfect graphs as explained in \cite[Ch.~9]{Lovasz}. Note that the construction of twisted prism over this last family yields the family of Hansen polytopes. \Defn{Order polytopes} of finite posets \cite{Stanleyorderpol} are also $2$-level. Very recently, a new subfamily of $2$-level polytopes arising from matroid theory has been characterized in \cite{Grande}. More precisely, this subfamily is associated with the base polytopes of \Defn{$2$-level matroids}.

A complete classification of the $0/1$-equivalence classes of $0/1$-polytopes is only available for dimension $3$, $4$, $5$, (and $6$ for polytopes up to $12$ vertices). Moreover two polytopes that are $0/1$-equivalent are also combinatorially equivalent, but the converse is not true. The difficulties in providing a complete list already in dimension $6$ suggest that a computational approach to the problem could be unsuccessful. The lack of an exact enumeration in dimension $\geq 6$ leads to a second natural question, namely the existence of asymptotic bounds for the number of $2$-level polytopes.

By means of the polytopal constructions we mentioned above (pyramid, prism, and Cartesian product) exponentially many combinatorially non-equivalent $2$-level polytopes can be constructed. In this paper we compute an explicit exponential lower bound for the number of $2$-level polytopes via $2$-level matroids. More precisely, we prove the following theorem.
\begin{theorem}\label{thm:main}
The number of combinatorially non-equivalent $(n{-}1)$-dimensional 2-level polytopes is bounded from below by $$c \cdot n^{-5/2} \cdot \rho^{-n},$$
where $c\approx 0.03791727 $ and $\rho^{-1}$ is a computable constant whose value is approximately equal to $4.88052854$.
\end{theorem}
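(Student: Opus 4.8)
The plan is to produce the bound by counting base polytopes of $2$-level matroids, which are $2$-level polytopes by the characterization recalled from \cite{Grande}. The base polytope of a connected matroid on $n$ elements is $(n-1)$-dimensional, so up to affine (hence combinatorial) equivalence we obtain $(n-1)$-dimensional $2$-level polytopes. The subtlety is that combinatorial equivalence of polytopes is much coarser than isomorphism of matroids, so before anything analytic I would use the structural properties of $2$-level matroids announced in the abstract to show that the map sending a connected $2$-level matroid to the combinatorial equivalence class of its base polytope has fibres of size bounded by a fixed constant — morally only matroid duality (which merely reflects the base polytope) and a few small symmetries can identify two distinct $2$-level matroids at the level of face lattices. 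Counting $2$-level matroids up to isomorphism then gives, up to a constant factor, a lower bound on the number of $(n-1)$-dimensional $2$-level polytopes.

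The second step is to turn the recursive description of $2$-level matroids — they are built from uniform matroids by iterated direct sums and $2$-sums, the $2$-sums arranged along a tree — into a specification in the sense of analytic combinatorics. A disconnected $2$-level matroid is a multiset of connected ones (direct sum of matroids corresponds to the Cartesian product of base polytopes, a construction already noted to preserve $2$-levelness), while a connected $2$-level matroid is encoded by an unrooted, unlabelled tree whose nodes carry the admissible prime blocks; the notation of the paper suggests that these fall into three families, which I will call the uniform, the multi-edge, and the ring blocks. Tracking the $n$ ground-set elements, this yields a finite system of functional equations for the ordinary generating functions of suitably rooted versions of these structures, in which the uniform blocks $U_{r,m}$ contribute explicit polynomial terms and the $2$-sum operation contributes compositional terms.

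Third, I would analyse the system. It is a positive polynomial system; after checking that it is strongly connected and aperiodic, the Drmota--Lalley--Woods theorem (equivalently, the smooth implicit-function schema) guarantees that the generating functions share a common dominant singularity $\rho>0$ at which they admit a square-root expansion $g(\rho)-a\sqrt{1-x/\rho}+O(1-x/\rho)$. Locating $\rho$ reduces to solving the system jointly with the vanishing of the relevant Jacobian, a finite algebraic computation giving the stated value $\rho^{-1}\approx 4.88052854$. The transfer theorem of singularity analysis then yields coefficient asymptotics of order $n^{-3/2}\rho^{-n}$ for the rooted series, and the extra power needed to reach $n^{-5/2}$ comes from the passage to the unrooted, unlabelled count: a dissymmetry argument (node-rooted minus edge-rooted, corrected by the cycle index of the block symmetries when pieces are exchangeable) cancels the $n^{-3/2}$ term and leaves a contribution of order $n^{-5/2}\rho^{-n}$, with the constant $c\approx 0.03791727$ read off from the subleading coefficients of the local expansions and the constant-factor loss in step one.

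The analytic part (steps two and three) should be essentially routine once the specification is correct; the real obstacle is step one, namely proving that the natural family of $2$-level matroids one enumerates injects, up to bounded multiplicity, into combinatorial equivalence classes of polytopes. This requires understanding precisely how the direct-sum/$2$-sum decomposition of a $2$-level matroid is visible in the face lattice of its base polytope and ruling out accidental coincidences between the face lattices of base polytopes coming from very different decomposition trees — a statement one can hope for only because the structure of $2$-level matroids is so rigid.
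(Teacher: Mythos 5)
Your overall route is the same as the paper's: restrict to connected $2$-level matroids, encode them by trees whose nodes are uniform blocks, write a positive system of functional equations for rooted versions, apply Drmota--Lalley--Woods to locate a common square-root singularity at $\rho$, use the dissymmetry theorem so that the leading singular term cancels and the unrooted count is of order $n^{-5/2}\rho^{-n}$, and then pass from matroids to polytopes up to a duality factor. The analytic portion of your plan matches the paper step by step.

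The genuine gap is precisely the step you yourself flag as ``the real obstacle'' and then leave unproved: controlling the fibre of the map sending a $2$-connected $2$-level matroid to the combinatorial type of its base polytope. Saying the fibres are ``bounded by a fixed constant'' with ``a few small symmetries'' is not enough to obtain the stated constant $c\approx 0.03791727$: that value is exactly half of the tree-count constant $C\approx 0.07583455$, so you need the fibre to have size at most $2$ (a matroid and its dual), not merely $O(1)$; with an unspecified constant you would only prove a bound of the form $c'\,n^{-5/2}\rho^{-n}$ for some unknown $c'$. The paper does not extract this from the rigidity of the $2$-level decomposition at all; it invokes a known general fact about $2$-connected matroids, namely that the basis graph (the $1$-skeleton of the base polytope) determines the matroid up to duality (Proposition \ref{prop:basisgraphs}, an exercise in White's \emph{Theory of matroids}), whence combinatorial equivalence of base polytopes forces congruence and then $\M\cong\N$ or $\M\cong\N^*$ (Corollary \ref{Ardila} and the corollary following it). A second, smaller omission: you treat the encoding of a connected $2$-level matroid by an unrooted tree of uniform blocks as given, but this encoding is only usable because (a) the $2$-sum of uniform matroids is independent of the choice of base points (the paper's Lemmas \ref{lemma:transp_inv} and \ref{lemma:node_inv_preserved}; the $P_6$ example shows this genuinely fails for non-uniform blocks), and (b) the admissible trees must carry the Cunningham--Edmonds restrictions (no two ring-vertices and no two multiedge-vertices adjacent, $\deg(\NN_i)\le |E(\NN_i)|$, degree bounds on each block type); without pinning these down, the family of trees you specify could overcount matroids (e.g.\ adjacent rings merge), and then the tree asymptotics would not be a valid lower bound for the matroid count, nor would $\rho$ and $C$ be the right constants.
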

The interest in the subfamily of $2$-level matroids is motivated by the fact that it contains more complicated polytopes, namely not obtained by means of elementary polytopal constructions. Moreover it allows to determine a large basis for the exponential lower bound.

New combinatorial aspects of $2$-level matroids are introduced in Section \ref{sect: matroids-decom} and give the possibility to increase the control on the enumerative formulas. It is noteworthy that this matroid family generalizes the family of \Defn{series-parallel graphs}, which appears in various areas and has several interesting properties that are likely to have counterparts for 2-level matroids. In particular, series-parallel graphs have been already successfully studied from an enumerative point of view in \cite{SP} and \cite{DrmotaFusyKangKrausRue}. To approach the enumeration of $2$-level matroids we investigate the \Defn{matroid tree decomposition} associated to these matroids.
We analyze the features of the decomposition and we get one of the main results of the paper: we observe that there is an interesting interpretation in terms of acyclic structures. More precisely, we reveal a bijection between $2$-level matroids and a family of trees, that we call \Defn{$\UMR$-trees}, whose vertices are labelled by uniform matroids and satisfy some adjacency restrictions. This last discovery makes $2$-level matroids particularly suitable for enumeration. Indeed the family of $\UMR$-trees is exploited in Section~\ref{sect:counting} to encode all the enumerative information in terms of generating functions and relations (equations) among them by means of the symbolic method in enumerative combinatorics. Finally, powerful analytic techniques are applied to the equations in order to get an asymptotic estimate for the coefficients of the generating functions.
\\

\paragraph{\textbf{Structure of the paper}} The paper is structured in the following way: in Section~\ref{sec:prelim} the basics on matroid theory and enumerative combinatorics are stated. In Section~ \ref{sect: matroids-decom} we study how to decompose $2$-level matroids in terms of tree-like structures ($\UMR$-trees). The structural properties of the $\UMR$-trees are exploited later in Section~\ref{sect:counting} in order to get counting formulas which can be analyzed by means of analytic techniques, producing the estimate stated in Theorem~\ref{thm:main}.

\section{Preliminaries}\label{sec:prelim}

In this section we introduce the basic notions needed in the rest of the paper. In Subsection \ref{subsec:matroid} we focus on definitions and concepts related to matroid theory. In Subsection \ref{sub:prelim_enumeration} we fix our notation concerning enumeration by means of generating functions and finally in Subsection \ref{subsec: prelim-asympt} we state the results needed in order to get asymptotic estimates for the coefficients of the generating functions under consideration.

\subsection{Matroids}\label{subsec:matroid}

The basic definition is the following:

\begin{defi}\label{dfn:matroid}
A \Defn{matroid} of rank $k$ is an ordered pair $\M = (E,\B)$ consisting of a finite set $E$ (ground set) and a collection of bases $\emptyset\neq \mathcal{B}\subseteq  \binom{E}{k}$ satisfying the Basis Exchange Axiom: for $B_1,B_2 \in \B$ and $x \in B_1 \setminus B_2$ there exists $y\in B_2\setminus B_1$ such that $(B_1 \setminus x)\cup y \in \B$.
\end{defi}
Matroids are combinatorial objects that generalize graphs and linear dependence: the family of \Defn{graphic matroids} is particularly interesting and useful to visualize examples of matroids.
The matroid associated to a graph $G=(V,E)$ is such that the ground set is given by the set of edges and the collection of bases is given by the set of spanning forests. The rank of a connected graph is clearly $|V|-1$.
However, it could sometimes be misleading to think in terms of the graph structure, since some information, like the vertex structure, is not retained at matroid level.

A matroid has many equivalent definitions (see \cite{Oxley} for more details): we presented the one using the collection of bases. Nevertheless we want to introduce two further collections of sets that can define a matroid. The first one is the collection of \Defn{independent sets}, that is all the sets $X \subseteq E$ such that $X \subseteq B$, for some $B\in \B$. The rank of $X\subseteq E$, denoted by $\rk_{\M}(X)$, is the cardinality of the largest independent subset contained in $X$. The second one is the collection of \Defn{circuits} $\mathcal{C}(\M)$. Circuits are minimal dependent sets of $\M$. An element $e$ such that $\{e\}$ is a circuit is called a \Defn{loop}.

Two matroids $\M$ and $\N$ are \Defn{isomorphic} if their collection of circuits are the same up to relabelling of the ground sets $E(\M)$ and $E(\N)$. More formally $\M \cong \N$ if there is a bijection $\varphi  :  E(\M) \rightarrow E(\N)$ such that, for all $X\subseteq E(\M)$, $\varphi(X)\in \mathcal{C}(\N)$ if and only if $X\in \mathcal{C}(\M)$.

Let us consider a fairly simple family of matroids that is of great importance in the rest of the paper, namely uniform matroids. The \Defn{uniform matroid} $U_{n,k}$ for $0\leq k\leq n$ consists of the ground set $[n]:=\{1,\ldots , n  \}$ and the collection of bases $\binom{[n]}{k}$. The uniform matroids which are also graphic matroids are of the form: $U_{n,0}$, $U_{n,1}$, $U_{n,n{-}1}$, and $U_{n,n}$. See Figure~\ref{fig:matroids}.

\begin{figure}[htb]
\begin{center}
\includegraphics[width=10.5cm]{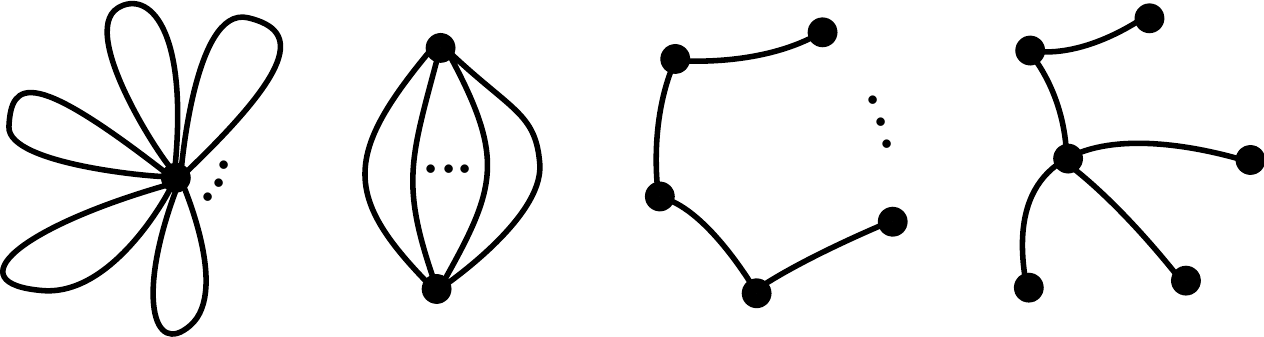}
\caption{From left to right, graphical representations of the matroids $U_{n,0}$, $U_{n,1}$, $U_{n,n{-}1}$, and $U_{n,n}$.}
\label{fig:matroids}
\end{center}
\end{figure}
Observe that for $U_{n,0}$ and $U_{n,n}$ we illustrated one among many possible graphical representations. Namely, Whitney's $2$-Isomorphism Theorem \cite[Thm.~5.3.1]{Oxley} implies that every graph formed by $n$ loops corresponds to $U_{n,0}$ regardless of the vertex structure, while any tree with $n$ edges corresponds to the matroid $U_{n,n}$.

For counting purposes we do not consider the uniform matroids $U_{n,0}$ and $U_{n,n}$, while among the other uniform matroids we need to distinguish the graphic ones from the non-graphic ones. More precisely we write $\multiedge_n$ to denote the matroid $U_{n,1}$ (it stands for \Defn{multiedge}) and $\ring_n$ to denote the matroid $U_{n,n{-}1}$ (it stands for \Defn{ring}).

The \Defn{dual matroid} $\M^*$ of a matroid $\M=(E,\B)$ is the matroid
defined by the pair $(E,\B^*)$ where $\B^*=\{E\setminus B \; : \; B\in \B
\} $. For uniform matroids we have $U_{n,k}^*=U_{n,n-k}$ and in particular $\ring_n^*=\multiedge_n$. An element $e$ is called a \Defn{coloop} of $\M$ if it is a loop of $\M^*$. A matroid $\M$ is \Defn{self-dual} if $\M\cong \M^*$. For instance, all uniform matroids of type $U_{2n,n}$ are self-dual.

\begin{defi}\label{dfn:base_conf}
Let $\M = (E,\B)$ be a matroid. The \Defn{base polytope} of $\M$ is the polytope
\[
    P_{\M} := \conv(\{ \Char{B} : B \in \B \} ).
    \]
\end{defi}
It was proven in \cite{GGMS} that all the edges of a base polytope are parallel to some difference $e_i-e_j$ of two unit vectors.

The base polytopes $P_{\ring_{n}}$ and $P_{\multiedge_{n}}$ are $n$-simplices, while the polytopes $P_{U_{n,k}}$ for $2\leq k \leq n{-}2$, are called \Defn{hypersimplices} and denoted by $\Delta_{n,k}$. For more background about this family of polytopes we refer to \cite{Zieglerhypersimp}.

A \Defn{$2$-level matroid} is a matroid such that the corresponding base polytope is $2$-level. In \cite{Grande} an excluded minor characterization for the family of $2$-level matroids is provided. The four excluded minors are the following rank $3$ matroids on $6$ elements: $\M(K_4)$, $\mathcal{W}^3$, $Q_6$, $P_6$. The first excluded minor of the list is nothing but the graphic matroid of the complete graph on $4$ vertices; for more details about these matroids we refer to Oxley's book \cite{Oxley} or to the paper where they are used to describe the $2$-level matroids \cite{Grande}. Since $P_6=([6],\B)$ appears in Example \ref{ex:P6}, we list here its collection of circuits
\[
\mathcal{C}(P_6)=\{123, 1245,1246, 1256, 1345,1346,1356, 1456,2345,2346,2356,2456,3456     \}.
\]
It is important to notice that there is only one circuit with $3$ elements. In \cite{Grande}, together with the excluded minor characterization of $2$-level matroids, a synthetic description of this class is also provided. Before presenting it, we need to introduce two matroid operations. Let $\M_1$ and $\M_2$ be matroids with disjoint ground sets $E_1$ and
$E_2$. The collection
\[
    \B \ := \ \{ B_1\cup B_2 :  B_1\in \B(\M_1), B_2\in \B(\M_2)\}
\]
is the set of bases of a matroid on $E_1 \cup E_2$, called the
\Defn{direct sum} of $\M_1$ and $\M_2$ and denoted by $\M_1 \oplus \M_2$. On the other hand, if we choose $e_1 \in E_1$ and $e_2\in E_2$ such that they are neither a loop nor a coloop of the respective matroids and define the collection
\[
    \B \ := \ \{ B_1\cup B_2\setminus \{e_1,e_2\}  :  B_1\in \B(\M_1), B_2\in
    \B(\M_2), |(B_1 \cup B_2) \cap \{e_1,e_2\}|=1 \},
\]
then the pair $(E_1 \cup E_2 \setminus \{ e_1, e_2 \} ,\B)$ defines a matroid called
\Defn{$2$-sum} of $\M_1$ and $\M_2$ with \Defn{base points} $e_1$ and $e_2$. We denote it by $(\M_1,e_1) \oplus_2 (\M_2,e_2)$.
Observe that this notation is slightly different from the one used in \cite{Oxley}, but it turns out to be more efficient for the constructive part.

\begin{theorem}[\cite{Grande}]\label{thm:unifdec2lev}
Every $2$-level matroid can be obtained as a sequence of direct sums and $2$-sums of uniform matroids. Moreover every combination of uniform matroids yields a $2$-level matroid.
\end{theorem}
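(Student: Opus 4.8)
The plan is to prove the two directions separately: sufficiency (every iterated direct sum and $2$-sum of uniform matroids is $2$-level) and necessity (every $2$-level matroid arises in this way). For both directions the basic tool will be the excluded-minor characterization recalled above: a matroid is $2$-level if and only if it has no minor isomorphic to $\M(K_4)$, $\mathcal{W}^3$, $Q_6$, or $P_6$. In particular the class of $2$-level matroids is minor-closed, and I will use this closure property repeatedly.

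For sufficiency, first note that each of the four excluded minors is connected --- in fact $3$-connected --- and that the base polytope of a direct sum is the product $P_{\M_1}\times P_{\M_2}$, so that a matroid is $2$-level if and only if each of its connected components is; equivalently, a connected minor of $\M_1\oplus\M_2$ is already a minor of $\M_1$ or of $\M_2$. It therefore suffices to treat connected matroids, i.e. to show that a $2$-sum $(\M_1,e_1)\oplus_2(\M_2,e_2)$ of two $\{\M(K_4),\mathcal{W}^3,Q_6,P_6\}$-minor-free matroids is again minor-free. Here I would invoke the standard fact (see \cite{Oxley}) that every $3$-connected minor of a $2$-sum $\M_1\oplus_2\M_2$ is isomorphic to a minor of $\M_1$ or of $\M_2$. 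Since $\M(K_4)$, $\mathcal{W}^3$, $Q_6$, $P_6$ are all $3$-connected, none of them can occur in such a $2$-sum; induction on the number of uniform pieces then finishes this direction, the base case being that every uniform matroid is minor-free because all its minors are uniform while none of the four excluded minors is uniform (each has circuits of more than one cardinality).

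For necessity, I would use the canonical decomposition of a connected matroid (the Cunningham--Edmonds/Seymour $2$-sum tree decomposition, \cite{Oxley}): every connected matroid can be built from $3$-connected matroids, circuits $\ring_n=U_{n,n-1}$, and cocircuits $\multiedge_n=U_{n,1}$ by repeated $2$-sums, and a disconnected matroid is the direct sum of its connected components. Applying this to a $2$-level matroid $\M$, each $3$-connected part occurs as a minor of $\M$ and hence is itself $2$-level; since circuits and cocircuits are already uniform matroids, it remains only to prove the key lemma: a $3$-connected $2$-level matroid is a uniform matroid $U_{n,k}$. Reading the decomposition backwards then exhibits $\M$ as a sequence of direct sums and $2$-sums of uniform matroids, as desired.

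The main obstacle is precisely this key lemma --- equivalently, that a $3$-connected matroid which is not uniform must contain one of $\M(K_4)$, $\mathcal{W}^3$, $Q_6$, $P_6$ as a minor. I would attack it by a structural case analysis: a non-uniform $3$-connected matroid $\M$ has a non-spanning circuit and (dually) a non-spanning cocircuit, and combining these configurations with the high connectivity one can restrict and contract down to a rank-$3$ configuration on $6$ elements that is forced to be one of the four small matroids; organizing and exhausting all the cases is the delicate combinatorial heart of the argument. A secondary point that needs care is verifying that the pieces of the $2$-sum decomposition are genuinely minors of the original matroid and that "$2$-level" transfers correctly under the inverse operations. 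An alternative, more polytopal route would be to compute the base polytope of a $2$-sum explicitly --- it is a glued product of the base polytopes of the minors $\M_i/e_i$ and $\M_i\setminus e_i$ --- and check directly that $2$-levelness is preserved by both operations; this would bypass some of the minor theory, but it still seems to require the classification of $3$-connected $2$-level matroids for the converse direction.
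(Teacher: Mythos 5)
First, a point of context: the paper does not prove this statement at all --- it is quoted directly from \cite{Grande}, so there is no in-paper proof to compare with, and your outline is essentially a reconstruction of the route taken in that source (excluded-minor characterization, reduction via the Cunningham--Edmonds tree decomposition, classification of the $3$-connected pieces). Within the context of this paper your reductions are legitimate: granting the excluded-minor characterization, $2$-levelness is minor-closed; the fact that a $3$-connected minor of a $2$-sum is (isomorphic to) a minor of one of the two parts is standard (Oxley), and all four excluded minors are $3$-connected, so the ``moreover'' direction goes through by induction; and for necessity you correctly reduce to the $3$-connected vertex labels of the canonical tree decomposition, which are minors of the given matroid and hence $2$-level.

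The genuine gap is exactly the key lemma you flag yourself: that a $3$-connected matroid with no minor among $\M(K_4)$, $\mathcal{W}^3$, $Q_6$, $P_6$ is uniform. This is the entire mathematical content of the hard direction, and your plan --- take a non-spanning circuit and a non-spanning cocircuit and ``restrict and contract down to a rank-$3$ configuration on $6$ elements that is forced to be one of the four small matroids'' --- is not an argument. Nothing in it explains why the $6$-element minor you land on is $3$-connected, nor why it cannot simply be $U_{3,6}$, the fifth rank-$3$, corank-$3$ $3$-connected matroid, which is $2$-level and must \emph{not} be excluded. The statement that every $3$-connected matroid of rank and corank at least $3$ has a minor in $\{\M(K_4),\mathcal{W}^3,Q_6,P_6,U_{3,6}\}$ is a genuine structure theorem (recorded in Oxley), not something obtained by an ad hoc case check; and even with it in hand one still has to upgrade ``$\M$ has a $U_{3,6}$-minor and none of the other four'' to ``$\M$ itself is uniform'', which needs a further inductive argument. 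So your proposal is an accurate roadmap that matches the strategy of the cited proof, but as written it leaves the crux unproved and does not establish the theorem.
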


The direct sum and the $2$-sum of matroids are closely related to the connectedness of a matroid: a matroid $\M$ is \Defn{$2$-connected} (or also \Defn{connected}) if it cannot be written as a proper direct sum of two matroids, and $\M$ is \Defn{3-connected} if it cannot be written as $2$-sum of two matroids each with fewer elements than $\M$.

A \Defn{separator} of a matroid $\M$ is a set $T\subseteq E$ such that $\rk_{\M}(T)+\rk_{\M}(E\setminus T)=\rk_{\M}(\M)$. A matroid $\M$ is $2$-connected if and only if there is no separator $T$, with $T$ being a proper subset of $E$.
The base polytope $P_{\M}$ of a matroid $\M=(E,\B)$ has dimension $|E|-c(\M)$ where $c(\M)$ is the number of $2$-connected components of $\M$. In particular, if $\M$ is $2$-connected, then $\dim (P_{\M})=|E|{-}1$.

If we try to look at matroid operations from the point of view of base polytopes we have:
\begin{itemize}
\item $P_{\M^*}=\1- P_{\M}$. This means that the base polytope of the dual matroid $P_{\M^*}$ is congruent to the base polytope $P_{\M}$;
\item $P_{\M_1\oplus \M_2}=P_{\M_1}\times P_{\M_2}$, where $\times$ denotes the Cartesian product of polytopes;
\item $P_{(\M_1,e_1)\oplus_2 (\M_2,e_2)}$ can be described using the subdirect product construction introduced in \cite{mcmullen} as shown in \cite{Grande}.
\end{itemize}

To keep the counting as easy as possible we first deal with $2$-connected matroids. This corresponds to considering only sequences of $2$-sums of uniform matroids. As a consequence, the polytopes we count cannot be obtained as a Cartesian product of two polytopes (for example no prism is in this family). At the end of Section \ref{sect:counting} we show that, asymptotically, the restriction to $2$-connected matroids does not alter the exponential growth.

The \Defn{basis graph} of a matroid $\M$ is the undirected graph with vertex set the collection of all bases of $\M$ such that a basis $B_1$ is connected to another basis $B_2$ whenever the symmetric difference $B_1 \Delta B_2$ has cardinality exactly $2$. Equivalently, it is the $1$-skeleton of the base polytope $P_\M$.

Let us conclude this section with some results for base polytopes that are used in Section \ref{sect:counting} to complete the asymptotic enumeration of $2$-level matroids. The first one appears as part of Exercise 4.9 in \cite[Ch.~4]{White}.

\begin{proposition}\label{prop:basisgraphs}
Let $\M$ and $\N$ be $2$-connected matroids. The basis graphs of $\M$ and $\N$ are isomorphic if and only if $\M \cong \N$ or $\M \cong \N^*$.
\end{proposition}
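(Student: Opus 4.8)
The plan is to establish the two implications separately. The ``if'' direction is immediate: if $\M\cong\N$ the basis graphs are obviously isomorphic, and if $\M\cong\N^{*}$, then the bijection $B\mapsto E(\N)\setminus B$ from $\B(\N)$ to $\B(\N^{*})$ satisfies $(E(\N)\setminus B_{1})\,\Delta\,(E(\N)\setminus B_{2})=B_{1}\,\Delta\,B_{2}$, hence is an isomorphism of basis graphs; composing it with the isomorphism induced by $\M\cong\N^{*}$ finishes this case. Note this already shows that ``up to duality'' cannot be dropped: $\multiedge_{n}=U_{n,1}$ and $\ring_{n}=U_{n,n-1}$ are dual, are non-isomorphic for $n\ge 3$, and have the same basis graph $K_{n}$.

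For the ``only if'' direction it suffices to prove that the isomorphism type of a $2$-connected matroid $\M$ is determined, up to replacing $\M$ by $\M^{*}$, by the abstract graph $G_{\M}$ (its basis graph). One may assume $\M$ has at least $3$ elements and is not one of $U_{n,0},U_{n,1},U_{n,n-1},U_{n,n}$, for which $G_{\M}$ is a point or a complete graph and the statement is checked directly; in particular, being $2$-connected, $\M$ then has no loop and no coloop, so every ground-set element takes part in some basis exchange, and the degenerate possibility that $G_{\M}$ splits as a nontrivial Cartesian product is excluded — this is exactly where $2$-connectedness is needed, since for $\M_{1}\oplus\M_{2}$ one could dualise a single summand and get a non-dual matroid with an isomorphic basis graph. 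Now fix a basis $B_{0}$. Every neighbour of $B_{0}$ in $G_{\M}$ has the form $(B_{0}\setminus x)\cup y$ for a \emph{unique} pair $x\in B_{0}$, $y\in E\setminus B_{0}$, and a short computation with symmetric differences shows that two distinct neighbours $(B_{0}\setminus x)\cup y$ and $(B_{0}\setminus x')\cup y'$ are adjacent in $G_{\M}$ if and only if $x=x'$ or $y=y'$. Thus $G_{\M}$ restricted to $N(B_{0})$ is a ``rook's graph'' on the set of realised exchange pairs, and its maximal cliques fall into two families: the $x$-classes (one per element of $B_{0}$) and the $y$-classes (one per element of $E\setminus B_{0}$). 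The only remaining freedom is a single global binary choice of which family records ``the elements inside the current basis'', and this choice is precisely what distinguishes $\M$ from $\M^{*}$.

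One then propagates this labelling along the edges of $G_{\M}$: passing to an adjacent basis changes the underlying set in exactly one element, and the rook structure at each vertex lets one match the clique-labels of adjacent vertices consistently. After fixing the global choice above, every vertex of $G_{\M}$ gets assigned a subset of one common $n$-element set, and the resulting set system is isomorphic to $\B(\M)$ or to $\B(\M^{*})$; applying the same procedure to $G_{\N}\cong G_{\M}$ then yields $\N\cong\M$ or $\N\cong\M^{*}$. The main obstacle is exactly this propagation step: one must check that the locally defined element-labels glue into a globally coherent labelling — i.e.\ that there is no monodromy around cycles of $G_{\M}$ — and that, once the binary choice is made consistently, precisely the two set systems $\B(\M)$ and $\B(\M^{*})$ arise. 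This is where the finer combinatorics of matroid basis graphs enters (the content of Exercise~4.9 in \cite{White}); by contrast, once a coherent labelling is available, verifying that the reconstructed set system satisfies the Basis Exchange Axiom and recovers $\M$ is routine.
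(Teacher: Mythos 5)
Your ``if'' direction is fine, and the observation that $2$-connectedness is what rules out the counterexamples coming from dualising one summand of a direct sum is exactly the right remark. The problem is the ``only if'' direction: what you present is not a proof but a reduction of the proposition to itself. The local analysis (neighbours of a fixed basis $B_0$ correspond to exchange pairs, adjacency among them is governed by sharing the leaving or the entering element, giving a rook-type structure whose cliques should be read as ``elements of $B_0$'' versus ``elements of $E\setminus B_0$'') is correct as far as it goes, but the entire content of the statement lies in the step you flag as ``the main obstacle'': showing that these local row/column labellings can be propagated over the whole basis graph without monodromy, that the only residual freedom is one global swap, and that the reconstructed set system is exactly $\B(\M)$ or $\B(\M^*)$ and in particular satisfies basis exchange. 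You do not carry out any of this; you explicitly defer it to Exercise~4.9 in White, which is precisely the result being proved. (There are also smaller untreated points: the ``checked directly'' cases where the basis graph is complete or degenerate are not actually enumerated, and in the rook structure a maximal clique consisting of a single exchange pair cannot be classified as an $x$-class or a $y$-class, so even the local dichotomy needs more care.)

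For context, the paper itself does not prove this proposition: it quotes it as part of Exercise~4.9 in \cite{White} and only proves consequences of it (Corollary~\ref{Ardila} and the combinatorial-equivalence corollary). So a citation would have matched the paper's treatment; a genuine proof would require the global reconstruction argument for basis graphs of $2$-connected matroids (in the spirit of Maurer's and Holzmann--Norton--Tobey's work), which your proposal outlines but does not supply.
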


Since two congruent polytopes have the same $1$-skeleton we easily obtain the following corollary, which also appears as an exercise in \cite[Ch.~1, Ex.~18]{Coxeter}.

\begin{corollary}\label{Ardila}
Let $\M$ and $\N$ be $2$-connected matroids. The base polytopes $P_{\M}$ and $P_{\N}$ are congruent if and only if $\M\cong \N$ or $\M\cong \N^*$.
\end{corollary}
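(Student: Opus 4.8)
The plan is to deduce this directly from Proposition~\ref{prop:basisgraphs}, using the fact recorded above that the basis graph of a matroid is exactly the $1$-skeleton of its base polytope.

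For the ``only if'' direction, suppose $P_{\M}$ and $P_{\N}$ are congruent, i.e.\ there is an isometry of the ambient Euclidean space carrying $P_{\M}$ onto $P_{\N}$. Any isometry between polytopes maps vertices to vertices and edges to edges, and therefore restricts to a graph isomorphism between the $1$-skeleton of $P_{\M}$ and the $1$-skeleton of $P_{\N}$. Since these $1$-skeleta are precisely the basis graphs of $\M$ and $\N$, Proposition~\ref{prop:basisgraphs} applies (here the $2$-connectedness hypothesis is used) and yields $\M \cong \N$ or $\M \cong \N^{*}$.

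For the ``if'' direction, one checks that each of the two alternatives produces a genuine congruence. If $\M \cong \N$ via a bijection $\varphi : E(\M) \to E(\N)$, then $\varphi$ induces a permutation of the coordinates of $\mathbb{R}^{E}$, which is an isometry, and this permutation sends each vertex $\Char{B}$ of $P_{\M}$ to the vertex $\Char{\varphi(B)}$ of $P_{\N}$, hence maps $P_{\M}$ onto $P_{\N}$. If instead $\M \cong \N^{*}$, recall that $P_{\M^{*}} = \1 - P_{\M}$; the point reflection $x \mapsto \1 - x$ is an isometry carrying $P_{\N}$ onto $P_{\N^{*}}$, and $P_{\N^{*}}$ is congruent to $P_{\M}$ by the previous case, so composing the two isometries gives the desired congruence between $P_{\N}$ and $P_{\M}$.

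I do not expect a real obstacle here: the whole content is carried by Proposition~\ref{prop:basisgraphs}, and the only points requiring (routine) care are the elementary verifications that coordinate permutations and the reflection $x \mapsto \1 - x$ are isometries, so that an abstract matroid isomorphism or duality translates into a geometric congruence, and conversely that a congruence of polytopes induces an isomorphism of their $1$-skeleta.
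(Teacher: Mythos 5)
Your argument is correct and matches the paper's: the paper derives the corollary from Proposition~\ref{prop:basisgraphs} via the observation that congruent polytopes have isomorphic $1$-skeletons (i.e.\ isomorphic basis graphs), with the converse direction supplied by the facts $P_{\M^*}=\1-P_{\M}$ and that a matroid isomorphism is a relabelling of coordinates. Your write-up simply makes these routine verifications explicit, so there is nothing to add.
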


It is known that ``congruent'' $\Rightarrow$ ``combinatorially equivalent''. The converse is in general not true, not even for $0/1$-polytopes: for instance we can find full-dimensional $0/1$-simplices with different volumes \cite{Lecton01pol}. Nevertheless, for the class of base polytopes, we get the following corollary of Proposition \ref{prop:basisgraphs}.
\begin{corollary}
Let $\M$ and $\N$ be $2$-connected matroids. The polytope $P_{\M}$ is congruent to $P_{\N}$ if and only if $P_{\M}$ is combinatorially equivalent to $P_{\N}$.
\end{corollary}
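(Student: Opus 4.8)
The plan is to prove the two implications separately; the forward direction is immediate, and the reverse direction reduces to the two results just established.

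First, "congruent $\Rightarrow$ combinatorially equivalent" is a general fact about polytopes and needs nothing matroid-specific: a congruence is in particular an affine isomorphism $P_{\M}\to P_{\N}$, and affine isomorphisms carry the face lattice of $P_{\M}$ onto that of $P_{\N}$. (This is exactly the implication recalled just before the statement, so I would simply cite it.)

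For the converse, I would argue as follows. Suppose $P_{\M}$ and $P_{\N}$ are combinatorially equivalent, i.e.\ there is an isomorphism $\Phi$ between their face lattices. In the face lattice of any polytope the vertices are precisely the atoms and the edges are precisely the rank-two elements (each covering exactly two atoms), so $\Phi$ restricts to an isomorphism of the $1$-skeleta of $P_{\M}$ and $P_{\N}$ as abstract graphs. By the remark preceding Proposition~\ref{prop:basisgraphs}, the $1$-skeleton of $P_{\M}$ is the basis graph of $\M$, and likewise for $\N$; hence the basis graphs of $\M$ and $\N$ are isomorphic. Since $\M$ and $\N$ are $2$-connected, Proposition~\ref{prop:basisgraphs} then gives $\M\cong\N$ or $\M\cong\N^{*}$, and in either case Corollary~\ref{Ardila} yields that $P_{\M}$ and $P_{\N}$ are congruent. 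Combining the two directions completes the proof.

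I do not expect a genuine obstacle here: the entire content of the corollary is carried by Proposition~\ref{prop:basisgraphs}, and the statement is essentially the observation that for base polytopes of $2$-connected matroids the $1$-skeleton already pins the polytope down up to congruence, so the full combinatorial type certainly does. The only step requiring a line of care is the passage from a face-lattice isomorphism to a basis-graph isomorphism, i.e.\ noting that the vertex--edge graph is recoverable purely from the face lattice; everything else is a direct invocation of Proposition~\ref{prop:basisgraphs} and Corollary~\ref{Ardila}.
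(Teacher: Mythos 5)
Your argument is correct and coincides with the paper's own proof: the reverse direction passes from the face-lattice isomorphism to the $1$-skeleton (i.e.\ basis-graph) isomorphism, invokes Proposition~\ref{prop:basisgraphs} to get $\M\cong\N$ or $\M\cong\N^*$, and concludes congruence via Corollary~\ref{Ardila}, while the forward direction is the standard implication. No issues.
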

\begin{proof}
We only need to prove one direction. If $P_{\M}$ is combinatorially equivalent to $P_{\N}$, then they have isomorphic face lattices and, in particular, isomorphic $1$-skeletons. By Proposition \ref{prop:basisgraphs}, $\M\cong \N$ or $\M\cong \N^*$ and therefore $P_{\M}$ is congruent to $P_{\N}$ by Corollary \ref{Ardila}.
\end{proof}
This last corollary allows us to investigate the number of non-congruent $2$-level base polytopes, instead of looking at combinatorial equivalence of such polytopes.

\subsection{The symbolic method in enumerative combinatorics. Tree-like structures} \label{sub:prelim_enumeration}

The reader is referred to ~\cite[Ch.~1]{FlajoletSedgewick} to see all the terminology and notation in full detail. Let $(\mathcal{A},|\cdot|)$ be an admissible combinatorial class, namely a set $\mathcal{A}$ endowed with a size function $|\cdot|$ such that the number of elements in $\mathcal{A}$ of any given size is finite. Then the \emph{generating function} (GF for short) associated to $\mathcal{A}$ is the formal power series $A(x)=\sum_{a\in \mathcal{A}} x^{|a|}=\sum_{n\geq 0}a_n x^n$. In particular, $a_n$ is the number of elements in $\mathcal{A}$ of size $n$ and we write $[x^n]A(x)=a_n$. We assume that every combinatorial class contains no object of size $0$, thus $a_0 = 0$. Given two generating functions $A(x)$ and $B(x)$, we write $A(x) \leq B(x)$ if for each $n$, $[x^n]A(x) \leq [x^n] B(x)$.

The \emph{symbolic method} in enumerative combinatorics (see~\cite[Ch.~1]{FlajoletSedgewick}) gives a direct way to translate combinatorial operations among combinatorial classes into operations involving their generating functions. Besides the disjoint union and Cartesian product of combinatorial families, which translate into sums and products of GFs, respectively, we introduce the multiset construction: given a combinatorial class $(\mathcal{A},|\cdot|)$ with GF $A(x)$, the \emph{multiset} of $\mathcal{A}$ is the combinatorial family obtained by taking all multisets of elements in $\mathcal{A}$. The corresponding GF is equal to
$$\mul(A(x))=\exp\left(\sum_{r=1}^\infty \frac{1}{r}A(x^r)\right).$$
Finally, we also need restricted multiset constructions. Let $\Lambda$ be a subset of positive integers. The multiset operator \emph{restricted to} $\Lambda$ of $\mathcal{A}$ is the combinatorial family obtained by taking multisets of elements in $\mathcal{A}$ with the restriction that the number of components lies in $\Lambda$. We write this as $\mul_{\Lambda}(A(x))$. In particular,
$$\mul_{0}(A(x))=1, \; \;  \; \mul_{1}(A(x))=A(x),\;\; \; \mul_{2}(A(x))=\frac{1}{2}\left(A(x)^2+A(x^2)\right).$$
The notation $\mul_{\geq k}$ refers to the multiset operator restricted to $\Lambda=\{k,k+1,...\}$.
\\
\paragraph{\textbf{The Dissymmetry Theorem for trees}} The Dissymmetry Theorem for trees (see~\cite{species}) provides a general methodology to relate a combinatorial class of unrooted trees with given properties to the corresponding classes of rooted trees.
More precisely, let $\mathcal{T}$ be a class of unrooted trees.
We define the following families of rooted trees: $\mathcal{T}_{\circ}$ is built from $\mathcal{T}$ by rooting a vertex,
$\mathcal{T}_{\circ-\circ}$ is the class of trees where an edge of $\mathcal{T}$ is rooted and $\mathcal{T}_{\circ\rightarrow\circ}$ is the class of trees obtained from $\mathcal{T}$ by rooting and orienting an edge.
The Dissymmetry Theorem for trees asserts that
\begin{equation}\label{eq:dys-theorem-abstract}
 \mathcal{T} \cup \mathcal{T}_{\circ\rightarrow\circ} \simeq
 \mathcal{T}_{\circ-\circ}\cup \mathcal{T}_{\circ},
\end{equation}
where ``$\simeq$'' means that there a bijection between the two combinatorial classes which translates directly into equalities of the corresponding generating functions.

\subsection{Asymptotic estimates and analytic combinatorics} \label{subsec: prelim-asympt}

By means of analytic methods we can obtain asymptotic estimates for $[x^n]A(x)$ in terms of the singularities of $A(x)$ with minimum complex modulus. Such singularities are called \Defn{dominant}.
Whenever $A(x)$ has non-negative coefficients, one of its dominant singularities (if there is any) is a positive real number by Pringsheim's Theorem, see~\cite[Thm.~IV.6]{FlajoletSedgewick}.

With this language, we obtain the asymptotic expansion of $[x^n]A(x)$ by \emph{transferring} the behaviour of $A(x)$ around its dominant singularity from a simpler function~$B(x)$ for which we know the asymptotic behaviour of the coefficients.
The first result in this direction is the \emph{Transfer Theorem for singularity analysis}~\cite{FlajoletOdlyzko,FlajoletSedgewick}. For our purposes we present a version of the theorem that covers the case when there is a unique dominant singularity $\rho$.

\begin{theorem}[Transfer Theorem for a unique dominant singularity~\cite{FlajoletOdlyzko}, simplified version]
\label{theo:transfer}
Assume that the generating function $A(x)$ is analytic in a dented domain $\Delta(\phi,R)$ at $\rho\in \mathbb{C}$, defined as the set
\[
\{x\in \mathbb{C}: x \neq \rho,\, |x|<R,\, |\mathrm{Arg}(x-\rho)|>\phi\},
\]
for $|\rho|<R\in \mathbb{R}$ and~$0<\phi<\pi/2$.
If $A(x)$ admits an expansion of the form
\[
A(x) = C\left(1 - \frac{x}{\rho} \right)^{-\alpha}+ O\left(\left(1-\frac{x}{\rho}\right)^{-\alpha+1}\right)
\]
for $x\rightarrow \rho$ in the dented domain~$\Delta(\phi, R)$ at $\rho$, and $\alpha \notin \{0,-1,-2,\dots\}$ then
\[
[x^n]A(x) = C\frac{1}{\Gamma(\alpha)} \cdot n^{\alpha-1} \cdot \rho^{-n} \, (1+o(1)),
\]
where~$\Gamma(s) = \int_0^\infty t^{s-1} e^{-t} dt$ denotes the classical Gamma function.
\end{theorem}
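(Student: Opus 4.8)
The plan is to follow the standard route of singularity analysis: normalize the dominant singularity to $1$, split $A(x)$ into an explicit main part plus an error part, and extract the $n$-th coefficient of each via Cauchy's integral formula together with a deformation of the integration contour into the dented domain.

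First I would reduce to the case $\rho=1$. Since in all applications $\rho$ is a positive real number (Pringsheim), one may rescale: with $\tilde A(x):=A(\rho x)$ one has $[x^n]A(x)=\rho^{-n}[x^n]\tilde A(x)$, and $\tilde A$ is analytic in the dented domain $\Delta(\phi,R/\rho)$ at $1$ with $\tilde A(x)=C(1-x)^{-\alpha}+O\bigl((1-x)^{-\alpha+1}\bigr)$ as $x\to1$ inside it. So it suffices to prove $[x^n]\tilde A(x)=\tfrac{C}{\Gamma(\alpha)}\,n^{\alpha-1}(1+o(1))$. For the explicit main part one invokes the generalized binomial theorem, $[x^n](1-x)^{-\alpha}=\binom{n+\alpha-1}{n}=\tfrac{\Gamma(n+\alpha)}{\Gamma(\alpha)\,\Gamma(n+1)}$, and the classical consequence of Stirling's formula $\Gamma(n+\alpha)/\Gamma(n+1)=n^{\alpha-1}(1+O(1/n))$, giving $[x^n](1-x)^{-\alpha}=\tfrac{1}{\Gamma(\alpha)}n^{\alpha-1}(1+O(1/n))$. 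This is exactly where the hypothesis $\alpha\notin\{0,-1,-2,\dots\}$ enters, so that $1/\Gamma(\alpha)$ is a well-defined nonzero constant.

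The technical heart is the following ``$O$-transfer lemma'': if $E(x)$ is analytic in $\Delta(\phi,R)$ at $1$ and satisfies $E(x)=O\bigl((1-x)^{-\beta}\bigr)$ there as $x\to1$, then $[x^n]E(x)=O(n^{\beta-1})$; applied with $\beta=\alpha-1$ this gives $[x^n]E(x)=O(n^{\alpha-2})=o(n^{\alpha-1})$ for the error part. To prove the lemma I would start from $[x^n]E(x)=\tfrac{1}{2\pi i}\oint_{|x|=r_0}\tfrac{E(x)}{x^{n+1}}\,dx$ with $r_0<1$ and, using that $E$ is analytic throughout the Delta-domain, deform this circle to a Hankel-type contour $\gamma$ hugging the cut at $1$: a small circle of radius $1/n$ about $x=1$, two rectilinear pieces leaving it at angles $\pm\theta$ with $\phi<\theta<\pi/2$, and the complementary major arc of a fixed circle $|x|=r$ with $1<r<R$. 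One then estimates $\tfrac{E(x)}{x^{n+1}}$ on each piece. On the outer arc $|x|=r>1$ makes $|x|^{-n-1}$ exponentially small while $E$ stays bounded, so the contribution is negligible. On the small circle $|x-1|=1/n$ forces $|E(x)|=O(n^{\beta})$, the length is $O(1/n)$, and $|x|^{-n-1}=O(1)$, for a total of $O(n^{\beta-1})$. On each rectilinear piece, parametrizing $x=1+\tfrac{t}{n}e^{\pm i\theta}$ with $t\ge1$ gives $|x-1|^{-\beta}=n^{\beta}t^{-\beta}$, $|x|^{-n-1}\le e^{-ct}$ for some $c>0$ (because $\cos\theta>0$), and $|dx|=dt/n$, so this piece contributes $O\bigl(n^{\beta-1}\int_1^{\infty}t^{-\beta}e^{-ct}\,dt\bigr)=O(n^{\beta-1})$. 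Summing the three pieces proves $[x^n]E(x)=O(n^{\beta-1})$.

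Finally, combining the two estimates yields $[x^n]\tilde A(x)=C\,[x^n](1-x)^{-\alpha}+[x^n]E(x)=\tfrac{C}{\Gamma(\alpha)}n^{\alpha-1}(1+O(1/n))+O(n^{\alpha-2})=\tfrac{C}{\Gamma(\alpha)}n^{\alpha-1}(1+o(1))$, and restoring the factor $\rho^{-n}$ gives the statement. I expect the main obstacle to be the contour estimate of the third paragraph: the Hankel contour must be chosen so that it lies entirely inside $\Delta(\phi,R)$ (which forces $\theta>\phi$ and $r<R$), and the radius $1/n$ of the inner circle is precisely what balances the near-singularity contribution against the contribution of the rectilinear pieces so that both have order $n^{\beta-1}$; establishing the uniform exponential decay $|x|^{-n-1}\le e^{-ct}$ along the rectilinear pieces (taking $r$ close enough to $1$, or separately handling the range $t\gtrsim n$ where $|x|$ is bounded away from $1$) is the one point that needs genuine care.
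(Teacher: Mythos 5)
Your proposal is correct: it is the standard Flajolet--Odlyzko argument (rescale to $\rho=1$, binomial asymptotics for the main term $C(1-x)^{-\alpha}$, and a Hankel-type contour inside the dented domain to transfer the $O\bigl((1-x)^{-\alpha+1}\bigr)$ error into $O(n^{\alpha-2})$), and you correctly flag the only delicate points, namely that $\theta>\phi$, $r<R$, and the uniform bound $|x|^{-n}\le e^{-ct}$ along the rectilinear pieces. The paper itself does not prove this statement but quotes it from the cited reference \cite{FlajoletOdlyzko}, and your proof is essentially the one given there, so there is nothing to compare beyond noting this agreement.
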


In the next sections we also have to analyze systems of functional equations. The main reference for this topic is the paper~\cite{Drmota}. For convenience, we rephrase it here in a simplified version (the interested reader could find the more general result in~\cite[Sec.~2.2.5.]{Drmota-book}).

Let $y_1(x),\dots, y_k(x)$ be generating functions satisfying a system of functional equations.
We define the vector $\y(x):=(y_1(x),\dots, y_k(x))$, and a system $\y=\textbf{F}(x; \y)$ satisfied by $\y(x)$. Notice that $\textbf{F}(x,\y)=(F_1(x,\y),\dots, F_k(x,\y))$.
We assume that each $y_i(x)$ is analytic at $x=0$, and that $y_i(0)=0$.
We also assume that all $F_i(x,\y)$ are analytic around 							$(0,\0)$ and have nonnegative Taylor coefficients around $(0,\0)$ (this condition assures the uniqueness of the solution).

The \emph{dependency graph} $G=(V,\mathcal{D})$ associated to the system $\y=\textbf{F}(x; \y)$ is the oriented graph whose vertex set is $V=\{y_1,\dots, y_k\}$ and the arc $\overrightarrow{y_{i}y_{j}}\in \mathcal{D}$ if and only if $\frac{\partial F_i(x,\y)}{\partial y_j}\neq 0$ (this indicates that $F_i(x,\y)$ really depends on $y_j$).
A dependency graph is called \emph{strongly connected} if every pair of vertices is connected by a directed path. With this terminology we have the following result:
\begin{theorem}[Singularity analysis of systems of functional equations~\cite{Drmota}, simplified version]\label{thm:drmota}
Let $\y(x)=\textbf{F}(x; \y(x))$ be a system of functional equations satisfying the conditions described above. Additionally, assume that the related dependency graph is strongly connected. Denote by $\textbf{I}_{k}$ the $k \times k$ identity matrix and by $\mathrm{Jac}(\textbf{F})$ the $k\times k$ Jacobian matrix associated to $\textbf{F}(x,\y)$. If the system
\begin{align}\label{eq:drmota}
\begin{cases}
\y&=\textbf{F}(x; \y) \\
 0&= \det\left(\textbf{I}_{k}-\mathrm{Jac}(\textbf{F})\right)
 \end{cases}
\end{align}
has a unique positive real solution $(x_0,\y_0)$ in the region of analyticity of each component of $\textbf{F}(x,\y)$, then there is a unique solution $\y(x)$ to the system of functional equations. Moreover, the functions $y_i(x)$ have nonnegative coefficients and a square-root expansion in a domain dented at $x_0$.
\end{theorem}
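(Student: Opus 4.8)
The plan is to derive this as a streamlined special case of the Drmota--Lalley--Woods machinery of~\cite{Drmota}, combining the analytic implicit function theorem with Perron--Frobenius theory; since the statement is essentially a reformulation of the cited result, I only sketch the argument.

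First I would recall why the formal solution exists, is unique, and is analytic near $0$. Because each $F_i(x,\y)$ is analytic around $(0,\0)$ with $F_i(0,\0)=0$ and nonnegative Taylor coefficients, the Picard iterates $\y^{[0]}=\0$, $\y^{[m+1]}=\textbf{F}(x;\y^{[m]})$ converge coefficientwise to the unique power-series fixed point $\y(x)$ with $y_i(0)=0$, and this solution automatically has nonnegative coefficients. Majorizing the whole system by a single scalar equation (replace every $y_j$ by one variable $z$ and enlarge the coefficients) shows each $y_i(x)$ has positive radius of convergence, so $\y(x)$ is genuinely analytic in a neighbourhood of the origin.

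Next I would locate the dominant singularity. By the analytic implicit function theorem, $\y(x)$ continues analytically along the positive real axis as long as $\det(\textbf{I}_{k}-\mathrm{Jac}(\textbf{F}))$ does not vanish on the solution curve. Strong connectivity of the dependency graph is exactly what makes the nonnegative matrix $\mathrm{Jac}(\textbf{F})(x,\y(x))$ irreducible, so Perron--Frobenius endows it with a simple positive dominant eigenvalue $\lambda(x)$ that is strictly increasing in $x$ (its entries increase with $x$ and with each $y_j$, which themselves increase). The continuation therefore breaks down precisely when $\lambda(x)=1$, that is, when $\det(\textbf{I}_{k}-\mathrm{Jac}(\textbf{F}))=0$ --- the second equation of~\eqref{eq:drmota}; by hypothesis this occurs at the unique positive solution $(x_0,\y_0)$, and irreducibility forces all components $y_i$ to become singular simultaneously there.

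Finally I would carry out the local analysis at $x_0$. Setting $\y=\y_0+\boldsymbol{u}$ and $x=x_0-s$ and expanding $\textbf{F}$ to second order, the singularity of the linear part (whose kernel is the one-dimensional Perron direction) forces $\boldsymbol{u}$ to behave to leading order like a constant multiple of the Perron eigenvector times $\sqrt{s}$, provided the relevant quadratic form is nonzero on that kernel. This non-degeneracy --- where the argument genuinely lives, and which I expect to be the main obstacle --- again follows from the nonnegativity and irreducibility hypotheses and isolates the square-root (rather than higher-order or mixed) regime. One then reads off $y_i(x)=y_{i,0}-c_i\sqrt{1-x/x_0}+O(1-x/x_0)$, checks via Pringsheim's Theorem and irreducibility that $x_0$ is the only singularity on the circle $|x|=x_0$, and uses the implicit function theorem to continue $\y$ analytically to a domain dented at $x_0$; this gives the asserted uniqueness, nonnegativity of coefficients, and square-root expansion, and puts each $y_i$ in a form to which Theorem~\ref{theo:transfer} applies.
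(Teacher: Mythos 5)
The paper does not actually prove this statement: it is quoted, in simplified form, from Drmota's work on systems of functional equations \cite{Drmota} (see also \cite{Drmota-book}), so there is no internal proof to compare you against. Your sketch follows the standard Drmota--Lalley--Woods route --- coefficientwise convergence of the iterates $\y^{[m+1]}=\textbf{F}(x;\y^{[m]})$ for existence, uniqueness and nonnegativity of the power-series solution; analytic continuation along the positive axis via the implicit function theorem until the Perron--Frobenius eigenvalue of the irreducible (by strong connectivity) Jacobian reaches $1$, which is the second equation of \eqref{eq:drmota}; and a local second-order analysis along the Perron direction producing the $\sqrt{1-x/x_0}$ behaviour --- and this is precisely how the cited result is established, so the overall architecture is right.

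Two caveats concern the step you yourself identify as the crux. First, the non-degeneracy of the quadratic form on the Perron kernel does \emph{not} follow from nonnegativity and irreducibility alone: if $\textbf{F}$ is affine in $\y$ the second-order term vanishes identically and the solution has a polar rather than a square-root singularity, so one needs genuine nonlinearity of at least one $F_i$ in the $y_j$'s; in the full version of the theorem this is an explicit hypothesis, and in the simplified statement it is only implicitly encoded in the requirement that \eqref{eq:drmota} have its solution strictly inside the region of analyticity. Second, ``$x_0$ is the only singularity on $|x|=x_0$'' is not a consequence of Pringsheim plus irreducibility: in the periodic case (e.g.\ the single equation $y=\tfrac{1}{2}x^2+\tfrac{1}{2}y^2$, with solution $1-\sqrt{1-x^2}$) there are conjugate dominant singularities and the function is not analytic in a dented domain of radius exceeding $x_0$, so an aperiodicity-type argument is needed there (the simplified statement itself quietly assumes this away). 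Neither point breaks your outline, but both are exactly where the quoted general theorem does real work, and the first one as you phrased it (attributing non-degeneracy to nonnegativity and irreducibility) is incorrect as stated.
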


\section{Matroid decomposition}\label{sect: matroids-decom}
This section is devoted to the analysis of the structure of $2$-level matroids. Every $2$-connected matroid has a tree decomposition which relies on the $2$-sum and we refer to \cite[Sect.~8.3]{Oxley} for a complete overview on this topic. We state here the results which are relevant for the paper and we explore further features of tree decomposition that are specific for the class of $2$-level matroids. First let us make precise what we mean by a decomposition.
\begin{defi}
A \Defn{matroid-labelled tree} is a tree $T$ with vertex set $\{ \NN_1,\ldots , \NN_s\}$ for some positive integer $s$ such that
\begin{enumerate}[\rm (i)]
\item the $\NN_i$'s are matroids with pairwise disjoint ground sets;
\item an edge joining $\NN_i$ and $\NN_j$ is labelled by a set $\{e_i,e_j\}$ such that $e_i\in E(\NN_i)$, $e_j\in E(\NN_j)$, and $e_i$, $e_j$ are neither loops nor coloops;
\item the labels of the edges of $T$ are pairwise disjoint.
\end{enumerate}
We call $\NN_1,\ldots , \NN_s$ the \Defn{vertex labels} of $T$.
%\begin{itemize}
%\item each $\M_i$ is a matroid.
%\item each edge $t$ of $T$ is labelled by a subset of $\cup_i E(\M_i)$ of size $2$.
%\item if $\M_{i_1}$ and $\M_{i_2}$ are joined by an edge $t=\{ e_{i_1},e_{i_2} \}$ of $T$, then $e_{i_1}{\in} E(\M_{i_1})$, $ e_{i_2}{\in} E(\M_{i_2})$, and either $\{e_{i_1}\}$ is not a separator of $\M_{i_1}$ or $\{e_{i_2}\}$ is not a separator of $\M_{i_2}$.
%\item the ground sets of the $\M_i$'s are pairwise disjoint as well as the edges of the tree $T$.
%\end{itemize}
%We call $\M_1,\ldots , \M_s$ the \Defn{vertex labels} of $T$.
\end{defi}

\begin{example}\label{ex:matrlabtree}
Let us consider the matroid-labelled tree in the picture whose vertex labels are all graphic matroids.
\begin{figure}[htb]
\begin{center}
\includegraphics[width=7.5cm]{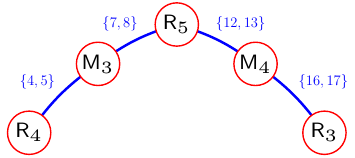}
\caption{Matroid-labelled tree with vertex labels of type ring and multiedge.}
\label{fig:matroid-labelled}
\end{center}
\end{figure}
In particular they are rings and multiedges. Each vertex label must be provided with its ground set and its collection of bases. For instance the ring $R_4$ has ground set $E(R_4)=\{ 1,2,3,4\}$ and collection of bases $\binom{[4]}{3}$. For a complete description of the vertex labels we refer to Example \ref{ex:graphdecomp}.
\end{example}

For a matroid-labelled tree $T$, we can contract an edge $t$ labelled by $\{e_i,e_j\}$ connecting two vertex labels $\NN_{i}$ and $\N_{j}$. The result is a matroid-labelled tree $T/t$ with the same edges and vertex labels, except that the vertex labels $\NN_{i}$ and $\NN_{j}$ have been gathered into a unique vertex label, namely ${(\NN_{i},e_{i})\oplus_2 (\NN_{j},e_{j})}$, and the edge $t$ has been contracted.

\begin{example}\label{ex:graphdecomp}
The vertex labels of the matroid-labelled tree introduced in Example \ref{ex:matrlabtree} are all graphic matroids. Thus, we can represent it as a sequence of $2$-sums of graphs. In the picture we specify the ground set for each of the graphs.
\begin{figure}[htb]
\begin{center}
\includegraphics[width=14cm]{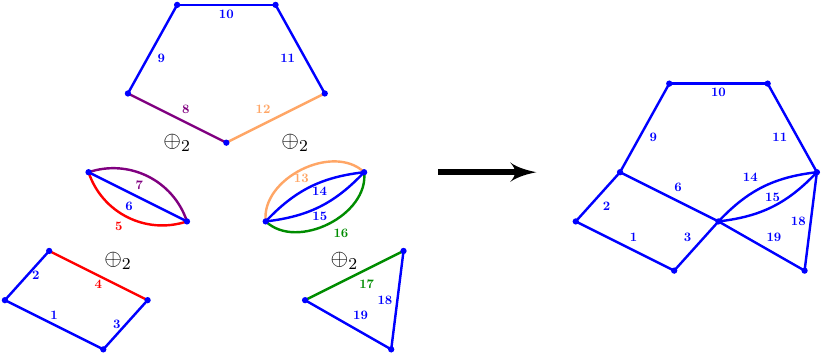}
\caption{Graph obtained after contraction of all edges of the matroid-labelled tree.}
\label{fig:tree-decomposition}
\end{center}
\end{figure}

Contracting an edge in the matroid-labelled tree corresponds to computing the $2$-sum of two graphs. If we contract all the edges we get the graph on the right which happens to be a series-parallel graph. Definition \ref{def:treedecomp} shows that the matroid-labelled tree we are considering is a tree decomposition for the matroid associated to this series-parallel graph.
\end{example}

\begin{defi}\label{def:treedecomp}
A \Defn{tree decomposition} of a $2$-connected matroid $\M$ is a matroid-labelled tree $T$ such that if $V(T)=\{\NN_1,\ldots, \NN_s \}$ and $E(T)=\{t_1,\ldots , t_{s-1}\}$, then
\begin{itemize}
\item $E(\M)=(E(\NN_1)\cup E(\NN_2) \cup \ldots \cup E(\NN_s))\setminus (t_1 \cup t_2 \cup \ldots \cup t_{s-1} )$;
\item $|E(\NN_i)|\geq 3 $ for all $i$, unless $|E(\M)|< 3$, in which case $s=1$ and $\NN_1=\M$;
\item $\M$ is the matroid that labels the single vertex of $T/\{t_1,t_2,\ldots, t_{s-1}\}$.
\end{itemize}
\end{defi}
We now report a theorem from \cite[Thm.~8.3.10]{Oxley} which first appeared in \cite{Cunningham}. According to our definitions, we replace the words ``circuit'' and ``cocircuit''  with ``ring'' and ``multiedge'', respectively.

\begin{theorem}\label{thm:uniquedecomp}
Let $\M$ be a $2$-connected matroid. Then $\M$ has a tree decomposition $T_\M$ in which every vertex label is $3$-connected, a ring, or a multiedge, and there are no two adjacent vertices that are both labelled by rings or are both labelled by multiedges. Moreover, $T_\M$ is unique up to relabelling of its edges.
\end{theorem}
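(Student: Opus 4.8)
The plan is to establish existence and uniqueness separately, both by induction on $|E(\M)|$, with uniqueness being the substantial part. For existence: if $|E(\M)|<3$ the one-vertex matroid-labelled tree with label $\M$ satisfies Definition~\ref{def:treedecomp}, and if $\M$ is $3$-connected, a ring, or a multiedge the one-vertex tree again works (the adjacency clause being vacuous). Otherwise $\M$ is $2$-connected but not $3$-connected, so by the definition recalled in Section~\ref{sec:prelim} it is a $2$-sum $\M=(\M_1,e_1)\oplus_2(\M_2,e_2)$ with $|E(\M_i)|<|E(\M)|$; since $2$-summing with a copy of $U_{2,1}$ merely relabels an element, discarding that trivial case I may assume $|E(\M_i)|\geq 3$. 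By induction each $\M_i$ has a tree decomposition $T_i$ of the required form, the basepoint $e_i$ lies in a unique vertex label of $T_i$, and joining those two labels by an edge labelled $\{e_1,e_2\}$ yields a matroid-labelled tree all of whose edge-contractions return $\M$ (the order being irrelevant, since $2$-sum respects the tree structure), with the size conditions of Definition~\ref{def:treedecomp} immediate.

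The tree just built can violate the adjacency condition only by having the new edge join two rings or two multiedges --- no other bad adjacency is created, by the inductive hypothesis applied to $T_1$ and $T_2$. Here I would use the fact that the $2$-sum of two rings along their basepoints is again a ring (a short direct computation on $U_{n,n-1}$, the multiedge case following by duality) and simply contract the offending edge, merging the two labels into one ring (resp.\ multiedge); since that vertex had no ring (resp.\ multiedge) neighbour inside $T_i$, a single contraction suffices, and as $\M$ is itself neither a ring nor a multiedge the tree is not collapsed to a point. This produces a $T_\M$ of the asserted form.

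For uniqueness I would again induct on $|E(\M)|$, the guiding principle being that the edges of a valid tree decomposition record exactly the non-crossing $2$-separations of $\M$, which are an invariant of $\M$ alone. Degenerate cases first: if $\M$ is $3$-connected then a decomposition with $s\geq 2$ would exhibit $\M$ as a genuine $2$-sum of a leaf label (size $\geq 3$) and the $2$-sum of the rest (also size $\geq 3$), contradicting $3$-connectedness; if $\M$ is a ring or a multiedge, then since a $2$-sum equals a ring (resp.\ multiedge) only when both of its $2$-connected parts are rings (resp.\ multiedges), the adjacency condition forces $s=1$. In general, pick a leaf edge $t$ of a decomposition $T$, with leaf label $\NN$ and associated $2$-separation $(X,Y)$; show that $(X,Y)$ is displayed by \emph{every} valid decomposition --- because $\NN$ is a maximal $3$-connected, or maximal series/parallel, block attached at a single element --- then delete $t$ from both trees, invoke uniqueness of the $2$-sum along $(X,Y)$, and apply the induction hypothesis to the two smaller matroids. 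The edge labels themselves are canonical only up to relabelling, since the basepoints realizing a prescribed $2$-sum may be chosen freely; this is precisely the clause ``unique up to relabelling of its edges'' in the statement.

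The main obstacle is this last step: one has to control how the $2$-separations of $\M$ interact --- which pairs cross, and how rings and multiedges absorb whole chains of pairwise non-crossing parallel $2$-separations --- so as to prove that the underlying labelled tree is forced. That is the heart of the Cunningham--Edmonds decomposition theory, and making it rigorous requires the matroid connectivity function rather than the soft arguments above.
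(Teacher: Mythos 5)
There is a genuine gap, and you have in fact named it yourself. First, note that the paper does not prove this statement at all: it is imported verbatim as \cite[Thm.~8.3.10]{Oxley}, originally due to Cunningham and Edmonds \cite{Cunningham}, so the only meaningful comparison is between your sketch and that decomposition theory. Your existence half is essentially sound: the induction on $|E(\M)|$, the dismissal of $U_{2,1}$-summands as relabellings, the computation that $(U_{n,n-1},e_1)\oplus_2(U_{m,m-1},e_2)\cong U_{n+m-2,n+m-3}$ (with the multiedge case by duality and Proposition~\ref{prop:dual_matroid}), and the observation that contracting the single offending edge restores the adjacency condition all work, modulo routine verification that contracting the edges of the glued tree in any order returns $\M$.

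The uniqueness half, however, is a plan rather than a proof. The inductive step hinges on the claim that the $2$-separation $(X,Y)$ displayed by a leaf edge of one valid decomposition is displayed by \emph{every} valid decomposition, so that the leaf label (a maximal $3$-connected, series, or parallel piece attached along a single element) is forced. Nothing in your argument establishes this: a priori another valid tree could realize a crossing $2$-separation instead, splitting the elements of your leaf label across several vertex labels, and ruling this out requires the submodularity of the connectivity function $\lambda$ and an analysis of how crossing $2$-separations interact (in particular, that crossing $2$-separations can only occur inside series or parallel classes, which is exactly what the ring/multiedge vertices and the adjacency condition absorb). This is the entire content of the Cunningham--Edmonds theorem, and you concede in your last sentence that the ``soft arguments above'' do not deliver it. As written, the proposal reduces the theorem to its own hardest case; to make it a proof you would either have to carry out the connectivity-function analysis (essentially reproducing \cite[Sect.~8.3]{Oxley}) or, as the paper does, cite the result. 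Minor additional gaps of the same flavour: the base case ``a $2$-sum is a ring only if both parts are rings'' and the subsequent claim that the adjacency condition then forces $s=1$ are asserted, not argued, though both are easily fixable from the circuit description in~\eqref{eqn:circuitsof2sum}.
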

In order to obtain the uniqueness, it is necessary to require that there are no two adjacent vertex labels that are both rings or multiedges, otherwise adjacent rings (or multiedges) could make possible to keep the same tree structure while changing the vertex labels. These additional requirements to get uniqueness justify why we consider separately the labels of type $\unif$, $\multiedge$, and $\ring$ in Section \ref{sect:counting}.

The theorem allows us to uniquely represent every matroid by a matroid-labelled tree whose vertex labels are $3$-connected matroids (except rings and multiedges). In this paper we want to tackle the problem from a different perspective: instead of starting with a matroid and finding its tree decomposition, the goal is to count how many non-isomorphic matroid-labelled trees can be constructed from a given set of possible vertex labels. In this constructive process, every time that we establish the adjacency of two vertices, we have to decide one element for each ground set of the two vertex labels to be the base points of the $2$-sum.

As shown in Example \ref{ex:P6}, the choice of the elements affects the result of the $2$-sum: there exist two non-isomorphic matroids whose tree decompositions have the same tree structure and the same vertex labels, but different labels for the edges of the tree.

Before presenting the example, let us give an explicit description of the collection of circuits of the matroid $(\M_1,e_1)\oplus_2 (\M_2,e_2)$, namely
{\small \begin{equation}\label{eqn:circuitsof2sum}
\mathcal{C}(\M_1\setminus e_1)\cup \mathcal{C}(\M_2 \setminus e_2)\cup \left\{(C_1-\{e_1\})\cup (C_2- \{e_2 \}) \; : \; e_1\in C_1 \in \mathcal{C}(\M_1) \textrm{ and }e_2\in C_2 \in \mathcal{C}(\M_2) \right\}.
\end{equation}}
\begin{example}\label{ex:P6}
Consider the $3$-connected matroid $P_6$ described in Subsection \ref{subsec:matroid}. Construct a matroid-labelled tree with two adjacent vertex labels, both equal to $P_6$ (see Figure \ref{fig:P6}). Let us label the ground set of the first copy of $P_6$ from $1$ to $6$ and the ground set of the second copy from $7$ to $12$. Each of the two copies has one circuit of length $3$ (we assume $\{1,2,3\}$ and $\{7,8,9\})$ and all the other circuits are of length $4$.
\begin{figure}[htb]
\begin{center}
\includegraphics[width=10.5cm]{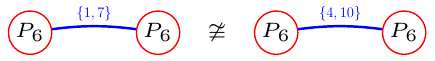}
\caption{Non-isomorphic matroids obtained as 2-sum of two matroids of type $P_6$.}
\label{fig:P6}
\end{center}
\end{figure}

Consider the matroid $(P_6,1)\oplus_2 (P_6,7)$. It has circuits of length $4$, $5$, $6$. On the other hand, the matroid $(P_6,4)\oplus_2 (P_6,10)$ has circuits of length $3$, $4$, $6$. Thus, the two matroids are not isomorphic.
\end{example}
Nevertheless if we focus on $2$-level matroids, the vertex labels are chosen among uniform matroids that we divide in the following three categories:
\begin{enumerate}[\rm(i)]
\item $\multiedge$-vertices: correspond to multiedges of size at least $3$;
\item $\ring$-vertices: correspond to rings of size at least $3$;
\item $\unif$-vertices: correspond to uniform matroids $U_{n,k}$ such that $n\geq 4$ and $2\leq k \leq n-2$.
\end{enumerate}
We define a new class of trees as follows.
\begin{defi}
Let $T$ be a tree whose vertex labels are of type $\unif$, $\multiedge$, and $\ring$ and such that no two $\multiedge$-vertices and no two $\ring$-vertices are adjacent. The tree $T$ is a \Defn{$\UMR$-tree} if $\deg(\NN_i)\leq |E(\NN_i)|$ for every vertex label $\NN_i$.
%there exists a labelling of its edges that turns $T$ into a matroid-labelled tree.
\end{defi}
For this particular class, the tree structure and the vertex labels are enough to determine the matroid uniquely up to matroid isomorphism. The proof of this fact is provided by Lemma \ref{lemma:transp_inv} and Lemma \ref{lemma:node_inv_preserved}. The main result of this section is the following theorem, which is required for the enumeration in Section \ref{sect:counting}.

\begin{theorem}\label{thm:main-matroid}
The family of $2$-connected $2$-level matroids is in bijection with the family of $\UMR$-trees.
\end{theorem}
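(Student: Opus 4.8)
The plan is to produce explicit maps in both directions and check that they are mutually inverse, using the uniqueness of the canonical tree decomposition (Theorem~\ref{thm:uniquedecomp}) and the synthetic description of $2$-level matroids (Theorem~\ref{thm:unifdec2lev}); throughout, $\UMR$-trees are taken up to tree isomorphism respecting the (abstract) vertex labels, edge labels being pure bookkeeping. \textbf{From matroids to $\UMR$-trees.} Let $\M$ be a $2$-connected $2$-level matroid with $|E(\M)|\ge 3$ (the finitely many smaller instances are checked by hand), and let $T_\M$ be its canonical tree decomposition from Theorem~\ref{thm:uniquedecomp}, whose vertex labels are $3$-connected matroids, rings, or multiedges, with no two adjacent rings and no two adjacent multiedges. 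Each vertex label of a tree decomposition is obtained from $\M$ by a sequence of one-element deletions and contractions (see \cite[Sect.~8.3]{Oxley}), hence is a minor of $\M$, and therefore is itself $2$-level. A $3$-connected $2$-level matroid cannot be written as a direct sum of two non-empty matroids nor as a $2$-sum of two matroids each with fewer elements, so by Theorem~\ref{thm:unifdec2lev} it must be a uniform matroid; consequently every vertex label of $T_\M$ is a uniform matroid, and, together with the requirement $|E(\NN_i)|\ge 3$ from Definition~\ref{def:treedecomp}, this forces each vertex label to be a $\unif$-, $\multiedge$-, or $\ring$-vertex. Finally, the edge labels of a matroid-labelled tree are pairwise disjoint, so the base points appearing at a fixed vertex $\NN_i$ are distinct elements of $E(\NN_i)$; hence $\deg(\NN_i)\le|E(\NN_i)|$, and $T_\M$ is a $\UMR$-tree.

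\textbf{From $\UMR$-trees to matroids.} Given a $\UMR$-tree $T$, let $\M(T)$ be the matroid obtained by contracting all of its edges, i.e.\ the iterated $2$-sum of its vertex labels. By the second part of Theorem~\ref{thm:unifdec2lev} the matroid $\M(T)$ is $2$-level, and it is $2$-connected because every uniform vertex label $U_{n,k}$ with $1\le k\le n-1$ is $2$-connected and the $2$-sum of two $2$-connected matroids is again $2$-connected. To make this assignment well defined on isomorphism classes one must know that $\M(T)$ does not depend on the base points used in the $2$-sums, and this is exactly what Lemma~\ref{lemma:transp_inv} and Lemma~\ref{lemma:node_inv_preserved} provide: the automorphism group of a uniform matroid acts transitively on its ground set, so base points may be permuted freely at each vertex, and this invariance propagates through the tree. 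This is the one place where the restriction to uniform labels is indispensable; for general $3$-connected labels the resulting matroid genuinely depends on the choice of base points, as Example~\ref{ex:P6} shows.

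\textbf{The two maps are inverse.} If $\M$ is $2$-connected and $2$-level, then contracting all edges of $T_\M$ returns $\M$ by the third bullet of Definition~\ref{def:treedecomp}, so $\M(T_\M)\cong\M$. Conversely, if $T$ is a $\UMR$-tree then $T$ is a tree decomposition of $\M(T)$ whose vertex labels are $3$-connected matroids, rings, or multiedges (recall $U_{n,k}$ with $2\le k\le n-2$ is $3$-connected), with no two adjacent rings and no two adjacent multiedges; by the uniqueness statement in Theorem~\ref{thm:uniquedecomp}, $T$ coincides with $T_{\M(T)}$ up to relabelling of its edges, hence represents the same isomorphism class of $\UMR$-trees. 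Since both maps preserve the size $|E(\M)|$ (which on a $\UMR$-tree with vertex labels $\NN_1,\dots,\NN_s$ equals $\sum_i|E(\NN_i)|-2(s-1)$), this is the desired bijection. I expect the delicate point to be the base-point independence underlying the backward map, supplied by Lemmas~\ref{lemma:transp_inv} and~\ref{lemma:node_inv_preserved} — without it ``$\UMR$-tree'' would not be a well-defined combinatorial class on isomorphism types — together with the comparatively routine task of tying the abstract tree-decomposition theory of \cite{Oxley} to the excluded-minor description of \cite{Grande} so as to identify the $3$-connected pieces as uniform matroids.
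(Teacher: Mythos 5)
Your proof is correct and takes essentially the same route as the paper: both directions rest on the uniqueness of the tree decomposition (Theorem~\ref{thm:uniquedecomp}) combined with Theorem~\ref{thm:unifdec2lev}, and on Lemmas~\ref{lemma:transp_inv} and~\ref{lemma:node_inv_preserved} to make the tree-to-matroid construction independent of the choice of base points. You simply make explicit some steps the paper leaves implicit (minor-closedness to identify the $3$-connected vertex labels as uniform matroids, the degree bound $\deg(\NN_i)\le|E(\NN_i)|$, and the verification that the two maps are mutually inverse), which is a welcome elaboration rather than a different argument.
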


Before presenting the proof of the theorem, we introduce some further definitions. For each vertex label $\NN_i$ of the tree decomposition of a matroid $\M$, we partition the ground set $E(\NN_i)=\{e_1,\ldots , e_{s_i}\}$ into two sets: the set $W(\NN_i)$ of elements which are base points for the $2$-sum with a vertex label adjacent to $\NN_i$ and the set $F(\NN_i)=E(\NN_i)\setminus W(\NN_i)$. We call $W(\NN_i)$ the set of \Defn{ideal elements} (generalizing the notion of ideal edge in \cite[Sect.~IV.3]{Tutte}) and $F(\NN_i)$ the set of \Defn{free elements}. Note that the ideal elements do not belong to the ground set of $\M$, while we have $E(\M)=\cup_{i} F(\NN_i)$.

For a matroid $\M$ let us consider the set of its circuits $\mathcal{C}$. We say that $\M$ is \Defn{transposition invariant} with respect to the pair of elements $\{e_1,e_2\}\subset E(\M)$ if we have that $\pi(\mathcal{C})=\mathcal{C}$, where $\pi$ is the transposition $(e_1 , e_2)$ and
$$ \pi(\mathcal{C})=\{\pi(C) \; : \; C\in \mathcal{C} \}. $$
The notation $\pi(C)$ means that we apply the permutation of the ground set $\pi : E(\M)\rightarrow E(\M)$ to the circuit $C$ of $\M$. A matroid is \Defn{permutation invariant} if it is transposition invariant with respect to every pair of elements in the ground set.
\begin{example}
Every uniform matroid $U_{n,k}$ is permutation invariant, since for every choice of $e_1,e_2\in [n]=E(U_{n,k})$, $\pi(\cdot)$ is a bijection from the set of $(k+1)$-subsets of $[n]$ to itself. Moreover if a matroid $\M=([n],\B)$ is permutation invariant, then it is a uniform matroid. Indeed let $C$ be the circuit with the least number $s$ of elements, then all the other subsets $\binom{[n]}{s}$ have to be circuits (by transposition invariance). It also follows that there cannot be other circuits. Thus $\M=U_{n,s-1}$.
\end{example}
We say that $\M$ is $\NN_i$-\Defn{transposition invariant}, for $\NN_i$ vertex label of the tree decomposition if it is transposition invariant with respect to every pair of elements in $F(\NN_i)$. We say that $\M$ is \Defn{node-invariant} if it is $\NN_i$-transposition invariant for every vertex label $\NN_i$ of the tree decomposition.
\begin{lemma}\label{lemma:transp_inv}
Let $\M$ be a $\NN_i$-transposition invariant $2$-connected matroid and $\U$ a uniform matroid. For any choice of $f\in F(\NN_i)$ and $u\in E(\U)$, the $2$-sum $(\M,f) \oplus_2 (\U,u)$ yields the same matroid up to isomorphism.
\end{lemma}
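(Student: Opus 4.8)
The plan is to show that if $\M$ is $\NN_i$-transposition invariant, then the isomorphism type of $(\M,f)\oplus_2(\U,u)$ does not depend on the choice of $f\in F(\NN_i)$ or $u\in E(\U)$, by exhibiting explicit relabelling bijections that carry one $2$-sum onto another. First I would handle the dependence on $u$: since $\U$ is a uniform matroid, it is permutation invariant (as noted in the Example preceding the lemma), so for any $u,u'\in E(\U)$ the transposition $(u\,u')$ is an automorphism of $\U$ sending the circuits through $u$ bijectively onto the circuits through $u'$. Plugging this into the description \eqref{eqn:circuitsof2sum} of the circuits of a $2$-sum, one sees that $(u\,u')$ (extended by the identity on $E(\M)\setminus f$) induces an isomorphism $(\M,f)\oplus_2(\U,u)\cong(\M,f)\oplus_2(\U,u')$. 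So it suffices to fix $u$ and vary $f$.

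Next, for two free elements $f,f'\in F(\NN_i)$, let $\pi=(f\,f')$ be the transposition on $E(\M)$. By hypothesis $\pi(\mathcal{C}(\M))=\mathcal{C}(\M)$, so $\pi$ is an automorphism of $\M$, and in particular it sends the circuits of $\M$ containing $f$ bijectively onto the circuits of $\M$ containing $f'$ (and fixes the collection $\mathcal{C}(\M\setminus f)$ setwise once we also note $\pi$ restricts correctly, since $\pi$ fixes all of $E(\M)$ except swapping $f,f'$, and $f'$ is free hence not a base point). I would then define a bijection $\varphi$ between the ground sets of the two $2$-sums: on $E(\M)\setminus\{f,f'\}$ it is the identity except it swaps nothing that matters, it sends $f'\mapsto f'$ but reroutes via $\pi$, and on $E(\U)\setminus u$ it is the identity. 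Concretely, $\varphi$ is the restriction of $\pi\cup\mathrm{id}_{E(\U)}$ to $E((\M,f)\oplus_2(\U,u))=(E(\M)\setminus f)\cup(E(\U)\setminus u)$, landing in $E((\M,f')\oplus_2(\U,u))=(E(\M)\setminus f')\cup(E(\U)\setminus u)$. Using \eqref{eqn:circuitsof2sum} for both $2$-sums, I would check the three types of circuits are matched: circuits coming from $\mathcal{C}(\U\setminus u)$ are fixed; circuits $C_1'\subseteq\mathcal{C}(\M\setminus f')$ are exactly the $\pi$-images of circuits in $\mathcal{C}(\M\setminus f)$; and the "glued" circuits $(C_1-f)\cup(C_2-u)$ with $f\in C_1\in\mathcal{C}(\M)$ map under $\varphi$ to $(\pi(C_1)-f')\cup(C_2-u)$, and $\pi(C_1)$ ranges over exactly the circuits of $\M$ containing $f'$ as $C_1$ ranges over those containing $f$. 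Hence $\varphi$ carries $\mathcal{C}((\M,f)\oplus_2(\U,u))$ bijectively onto $\mathcal{C}((\M,f')\oplus_2(\U,u))$, which is precisely the definition of a matroid isomorphism.

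Combining the two steps (vary $u$ freely, then vary $f$ freely) gives the claim. The main obstacle I anticipate is purely bookkeeping: one must be careful that $\pi$, although a priori only a set-automorphism of $\mathcal{C}(\M)$, genuinely restricts to compatible bijections on the three constituent families in \eqref{eqn:circuitsof2sum} — in particular that it preserves the property "$e_1\in C_1$" correctly when relabelling $f\leftrightarrow f'$, and that deleting $f$ versus deleting $f'$ is intertwined by $\pi$. This requires using that $f'$ (and $f$) is a free element, so it is neither a loop nor a coloop and is not itself a base point of any edge of the tree decomposition, which is what makes the $2$-sum with base point $f'$ well-defined and makes the identification clean. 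No genuinely hard analytic or combinatorial difficulty is expected beyond this verification.
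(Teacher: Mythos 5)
Your proposal is correct and follows essentially the same route as the paper: reduce the choice of $u$ to permutation invariance of $\U$, then for $f,f'\in F(\NN_i)$ use the transposition $\pi=(f\,f')$ to define the relabelling bijection and verify, via the circuit description \eqref{eqn:circuitsof2sum}, that each of the three types of circuits is carried to the corresponding type (the paper's $\varphi$ is exactly your restriction of $\pi\cup\mathrm{id}_{E(\U)}$, sending $f'\mapsto f$ and fixing everything else). Only the informal phrase ``it sends $f'\mapsto f'$'' is a slip, corrected by your subsequent concrete definition, so no gap remains.
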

\begin{proof}
The uniform matroid $\U$ is permutation invariant and thus the choice
of $u \in E(\U)$ does not affect the result of the 2-sum. Consider any two elements $f_1,f_2\in F(\NN_i)$. We want to show that
\[
S_{f_1}:=(\M,f_1) \oplus_2 (\U,u)\cong (\M,f_2) \oplus_2 (\U,u)=:S_{f_2}.
\]
Notice that $E(S_{f_2})=E(S_{f_1})- \{ f_2 \} \cup \{ f_1 \}$. We claim that the bijection\\${\varphi \ : E(S_{f_1}) \rightarrow E(S_{f_2})}$ such that
\[
\varphi(e)=\begin{cases}
f_1 & \mbox{,  if }  e= f_2 \\
e  & \mbox{, otherwise}
\end{cases}
\]
yields the matroid isomorphism. We need to show that for every $X\subset E(S_{f_1})$, $X\in \mathcal{C}(S_{f_1})$ if and only if $\varphi(X)\in \mathcal{C}(S_{f_2})$.

As we have seen in ~\eqref{eqn:circuitsof2sum} a circuit $C$ of $S_{f_1}$ can be of $3$ different types:
\begin{itemize}
\item $C\in \mathcal{C}(\U \setminus u)$. In this case $\varphi(C)=C$ and clearly $C\in \mathcal{C}(S_{f_2})$.
\item $C\in \mathcal{C}(\M \setminus f_1)$. This implies that $C$ is a circuit of $\M$, $f_1\notin C$. Since $\M$ is $\NN_i$-transposition invariant, we have that $\pi(C)\in \mathcal{C}(\M)$ for $\pi=(f_1 , f_2)$. Moreover, $f_1\notin C$ implies $f_2\notin \pi(C)$, that is $\pi(C)\in \mathcal{C}(\M\setminus f_2)$. Finally, $\varphi(C)=\pi(C)\in \mathcal{C}(\M\setminus f_2)$ and thus $\varphi(C)\in \mathcal{C}(S_{f_2})$.
\item $C=(C_1-\{f_1\})\cup (C_2-\{ u \})$, $f_1\in C_1 \in \mathcal{C}(\M)$ and $u\in C_2 \in \mathcal{C}(\U)$. Since $\M$ is $\NN_i$-transposition invariant, for $\pi=(f_1 , f_2)$ we have $\pi(C_1) \in \mathcal{C}(\M)$ and $f_2\in \pi(C_1)$. Moreover, $\varphi(C)=(\pi(C_1)-\{ f_2\})\cup (C_2-\{ u \})$, $f_2\in \pi(C_1) \in \mathcal{C}(\M)$ and $u\in C_2 \in \mathcal{C}(\U)$ and thus $\varphi(C)\in \mathcal{C}(S_{f_2})$.
\end{itemize}
The same argument applies to check that all circuits of $S_{f_2}$ are circuits of $S_{f_1}$ under the map $\varphi^{-1}$. This concludes the proof.
\end{proof}
\begin{lemma}\label{lemma:node_inv_preserved}
Let $\M$ be a node-invariant $2$-connected matroid and $\U$ a uniform matroid. The $2$-sum $(\M,f) \oplus_2 (\U,u)$ is a node-invariant matroid for any choice of $f\in E(\M)$ and $u\in E(\U)$.
\end{lemma}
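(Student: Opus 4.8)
The plan is to verify directly that the 2-sum $S := (\M,f) \oplus_2 (\U,u)$ is $\NN_j$-transposition invariant for every vertex label $\NN_j$ of the tree decomposition of $S$. First I would describe what that tree decomposition looks like: since $\U$ is a uniform matroid that is either a $\unif$-, $\multiedge$-, or $\ring$-vertex, the tree decomposition of $S$ is obtained from the tree decomposition of $\M$ by attaching one new vertex label $\NN_{\U}$ (a suitable contraction of adjacent like-type vertices may be needed if the new vertex and its neighbour are both rings or both multiedges, but this only merges vertex labels into another uniform matroid and does not change the set of free elements, so it is harmless for the argument). The free elements of $S$ are then $F(\NN_i) \setminus \{f\}$ on the vertex label $\NN_i$ of $\M$ that contained $f$, the unchanged sets $F(\NN_{i'})$ for the other old vertices, and $F(\NN_{\U}) = E(\U) \setminus \{u\}$ on the new vertex. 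So I must check transposition invariance of $\mathcal{C}(S)$ with respect to any pair of free elements lying on a common vertex label.

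The key step is a case analysis on where the pair $\{g_1, g_2\}$ of free elements sits. If both lie in $F(\NN_{\U})$, invariance is immediate because $\U$ is permutation invariant and the transposition $(g_1,g_2)$ acts as a bijection on each of the three families of circuits in the description \eqref{eqn:circuitsof2sum} — it fixes the circuits coming entirely from $\mathcal{M}\setminus f$, and on the mixed circuits $(C_1 - \{f\}) \cup (C_2 - \{u\})$ it permutes the $C_2$-part within $\mathcal{C}(\U)$. If instead both $g_1, g_2$ lie in some $F(\NN_{i'})$ with $i' \neq i$ (a vertex label untouched by the 2-sum), then they are free elements of $\M$ on that vertex label, so node-invariance of $\M$ gives $\pi(\mathcal{C}(\M)) = \mathcal{C}(\M)$ for $\pi = (g_1,g_2)$; I then push this through the three circuit types of \eqref{eqn:circuitsof2sum} exactly as in the proof of Lemma \ref{lemma:transp_inv}, using that $\pi$ fixes $f$ and $u$. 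The remaining case, $g_1, g_2 \in F(\NN_i)\setminus\{f\}$, is the same: these are free elements of $\M$ with respect to the vertex label $\NN_i$, so $\M$ is $\NN_i$-transposition invariant with respect to $\{g_1,g_2\}$, and again I transport $\pi = (g_1,g_2)$ through \eqref{eqn:circuitsof2sum}, noting $f \notin \{g_1,g_2\}$ so the base point $f$ — hence the structure of the mixed circuits — is untouched.

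The mild subtlety, and the step I would be most careful about, is the bookkeeping around the contraction that may be forced when $\NN_{\U}$ and its neighbour are both rings or both multiedges: I need that merging them via a 2-sum of two uniform matroids again yields a uniform matroid (true: $(\multiedge_p, e)\oplus_2(\multiedge_q,e') = \multiedge_{p+q-2}$ and dually for rings), so the merged vertex label is still permutation invariant and its free-element set is the disjoint union of the two original free-element sets — reducing this case to the first two. Aside from that, everything is a mechanical re-run of the circuit-chasing already carried out in Lemma \ref{lemma:transp_inv}, so no genuinely new obstacle arises; the content is simply that node-invariance is a local property of each vertex label and the 2-sum only disturbs one free element, namely the base point $f$, which is removed.
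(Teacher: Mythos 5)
Your proof is correct and follows essentially the same route as the paper's: check transposition invariance vertex label by vertex label, pushing the transposition through the three circuit types of \eqref{eqn:circuitsof2sum} using node-invariance of $\M$ and permutation invariance of $\U$. The only divergence is your extra care about the merged ring--ring or multiedge--multiedge vertex, which the paper's proof silently ignores; just note that for a mixed pair (one free element from each side of the merge) the reduction is not literally to your first two cases, but instead uses invariance of $\M$ with respect to the pair $\{g_1,f\}$ involving the base point, which node-invariance of $\M$ does supply.
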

\begin{proof}
Choose a vertex label $\NN_i$ of the unique tree decomposition $T_\M$ of $\M$. Without loss of generality, let us assume $f\in F(\NN_i)$. To prove that $S_f:=(\M,f)\oplus_2 (\U,u)$ is node-invariant, we need to check the transposition invariance for each vertex label. For any vertex label $\NN_j$ and $f_1,f_2\in F(\NN_j)$, $f_1,f_2\neq f$, we have that the set $\mathcal{C}(S_f)$ is invariant under $\pi=(f_1 ,  f_2)$. Indeed $\C(\M\setminus f)$ and $\C(\U\setminus u)$ are invariant under $\pi$ because $\M$ is node-invariant and $\U$ is permutation invariant. The same holds true for the circuits of the third type, since
$$\pi((C_1- \{f\}) \cup (C_2 -\{ u\}))=(\pi(C_1)-\{ f\}) \cup (C_2 -\{ u\})$$
and $f\in \pi(C_1) \in \mathcal{C}(\M)$ by node-invariance of $\M$.

The tree decomposition of $S_f$ has one new node, labelled by the uniform matroid $\U$. We still have to check that $S_f$ is $\U$-transposition invariant. The same argument used above applies to $\U$, since it is a permutation invariant matroid.
\end{proof}

\begin{proof}[Proof of Theorem~\ref{thm:main-matroid}:]
Let us start with a uniform matroid $\NN_1$. Since $\NN_1$ is permutation invariant, it is also node-invariant. The $2$-sum of $\NN_1$ with a second uniform matroid $\NN_2$ yields a node-invariant matroid by Lemma \ref{lemma:node_inv_preserved}. We can iteratively add by $2$-sum new uniform matroids $\NN_3,\,\NN_4,\,\ldots,\, \NN_s$. The matroid we get at every step is clearly node-invariant. Moreover, at the $j$-th iteration we have to select which vertex label $\NN_i$, $i<j$ of $\M$ is adjacent to $\NN_j$. Once we fix $\NN_i$, the resulting matroid $(\M,f)\oplus_2(\NN_j,e)$ does not depend (up to isomorphism) on the choice of $f\in F(\NN_i)$ by Lemma \ref{lemma:transp_inv}. We can conclude that the structure of the tree decomposition and the vertex labels are enough to determine uniquely the $2$-level matroid. Vice versa Theorem \ref{thm:uniquedecomp} together with Theorem \ref{thm:unifdec2lev} proves that a $2$-connected $2$-level matroid uniquely identifies a tree structure with vertex labels chosen among the uniform matroids.
\end{proof}

We close the section with a proposition from \cite[Prop.~7.1.22]{Oxley} which is needed to deal with self-duality in \ref{subsec:selfdual}.
\begin{proposition}\label{prop:dual_matroid}
Let $\M_1$ and $\M_2$ be two matroids and $e_i\in E(\M_i)$. Then
\[
((\M_1,e_1)\oplus_2 (\M_2,e_2))^*=(\M_1^*,e_1) \oplus_2 (\M_2^*,e_2).
\]
\end{proposition}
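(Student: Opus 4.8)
The plan is to prove the identity directly at the level of bases. Write $E_i := E(\M_i)$ and $E := (E_1\cup E_2)\setminus\{e_1,e_2\}$, and observe at the outset that $E$ is the common ground set of both sides, since dualizing does not change the ground set, and that the hypothesis ``$e_i$ is neither a loop nor a coloop of $\M_i$'' is self-dual (a loop of $\M_i^*$ is a coloop of $\M_i$ and a coloop of $\M_i^*$ is a loop of $\M_i$), so that the right-hand side is a legitimately defined $2$-sum.

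First I would unfold the $2$-sum in terms of bases: every basis of $(\M_1,e_1)\oplus_2(\M_2,e_2)$ has the form $B = (B_1\cup B_2)\setminus\{e_1,e_2\}$ with $B_i\in\B(\M_i)$ and $|(B_1\cup B_2)\cap\{e_1,e_2\}| = 1$, so that exactly one of ``$e_1\in B_1$'' and ``$e_2\in B_2$'' holds; and by definition a basis of the dual matroid is $E\setminus B$. The core of the argument is the set-theoretic identity (valid in both cases, using that $E_1,E_2$ are disjoint and $e_i\in E_i$)
\[
E\setminus B \;=\; \bigl((E_1\setminus B_1)\cup(E_2\setminus B_2)\bigr)\setminus\{e_1,e_2\}.
\]
Here $B_1^* := E_1\setminus B_1\in\B(\M_1^*)$ and $B_2^* := E_2\setminus B_2\in\B(\M_2^*)$. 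In the case $e_1\in B_1$ (hence $e_2\notin B_2$) we get $e_1\notin B_1^*$ and $e_2\in B_2^*$, so $|(B_1^*\cup B_2^*)\cap\{e_1,e_2\}| = 1$; this is precisely the condition for $E\setminus B$ to be a basis of $(\M_1^*,e_1)\oplus_2(\M_2^*,e_2)$. The case $e_2\in B_2$ is symmetric.

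To close the argument I would note that $B\mapsto E\setminus B$ is a bijection between the bases of any matroid on $E$ and those of its dual, and likewise $B_i\mapsto E_i\setminus B_i$ is a bijection $\B(\M_i)\leftrightarrow\B(\M_i^*)$; the displayed equation exactly expresses the compatibility of these three bijections. Consequently, running the same computation in reverse, starting from an arbitrary basis of $(\M_1^*,e_1)\oplus_2(\M_2^*,e_2)$ and invoking $(\M_i^*)^*=\M_i$, yields the opposite inclusion of basis families. Hence $((\M_1,e_1)\oplus_2(\M_2,e_2))^*$ and $(\M_1^*,e_1)\oplus_2(\M_2^*,e_2)$ have the same ground set $E$ and the same collection of bases, so they coincide.

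I expect the only genuinely delicate point to be the little case split — verifying that the ``exactly one of $e_1,e_2$ belongs to $B_1\cup B_2$'' condition is transported verbatim under complementation; everything else is routine bookkeeping with complements of disjoint sets. An alternative route would go through circuits, using \eqref{eqn:circuitsof2sum}: the cocircuits of $(\M_1,e_1)\oplus_2(\M_2,e_2)$ are the circuits of its dual, and one could try to match them against \eqref{eqn:circuitsof2sum} applied to $(\M_1^*,e_1)\oplus_2(\M_2^*,e_2)$ via $\M^*\setminus e=(\M/e)^*$ and $\M^*/e=(\M\setminus e)^*$; but that forces one to keep track of deletion versus contraction on both factors, so I would prefer the basis computation above.
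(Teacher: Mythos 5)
Your argument is correct, and it is worth noting that the paper itself gives no proof at all: Proposition \ref{prop:dual_matroid} is simply quoted from Oxley [Prop.~7.1.22], where the standard route goes through the duality of parallel and series connections (the $2$-sum being obtained from the parallel connection by deleting the base point), so the comparison is between your direct computation and that more structural detour rather than between two proofs in the paper. Your basis-level verification is the more elementary option and fits the paper's conventions well, since the paper defines the $2$-sum precisely by its basis collection: the identity $E\setminus B=\bigl((E_1\setminus B_1)\cup(E_2\setminus B_2)\bigr)\setminus\{e_1,e_2\}$ is exactly right for disjoint $E_1,E_2$ with $B_i\subseteq E_i$, the transport of the condition $|(B_1\cup B_2)\cap\{e_1,e_2\}|=1$ under complementation is sound (exactly one of two statements holds iff exactly one of their negations holds, and $e_i\in B_1\cup B_2$ iff $e_i\in B_i$), your observation that the non-loop/non-coloop hypothesis is self-dual legitimizes the right-hand side, and the reverse inclusion via $(\M_i^*)^*=\M_i$ closes the set equality. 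The only thing you rely on implicitly is that the collection displayed in the paper's definition of the $2$-sum really is the full basis collection of a matroid, which the paper asserts, so nothing is missing; Oxley's route buys a uniform treatment of parallel, series, and $2$-sums at once, while yours buys a short, self-contained verification from the definition actually used in this paper.
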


\section{Counting $\UMR$-trees}\label{sect:counting}
In this section we apply the results in Section \ref{sect: matroids-decom} to get enumerative formulas for the number of $2$-level matroids of fixed size.
By means of Theorem \ref{thm:main-matroid}, this is equivalent to the enumeration of $\UMR$-trees. To the set of $\unif$-vertices, $\multiedge$-vertices, and $\ring$-vertices,
%More precisely, each UMR-tree has different types of vertices: \emph{$R$-vertices}, \emph{$M$-vertices} (respectively associated to uniform matroids $U_{n,n{-}1}$ and $U_{n,1}$), and \emph{$U$-vertices} (associated to uniform matroids of type $U_{n,k}$, $2\leq k \leq n-2$). The \Defn{degree} of a vertex is defined in the traditional way as the number of edges connected to the vertex.
we add an additional type of vertices that we call \emph{legs}. Legs always have degree $1$, and are graphically represented by small red disks. For each free element of a vertex label $\NN_i$ we draw a leg connected to $\NN_i$. Observe that legs represent all the leaves of the tree. Hence, we develop enumerative formulas in terms of the number of legs in our tree mode and we translate them into counting results in the matroid setting.
The combinatorial restrictions we consider in our trees (which naturally arise from the obstructions inherited from the matroid setting) are the following:
\begin{enumerate}
\item The edges are unlabelled;
\item No two $\ring$-vertices and no two $\multiedge$-vertices are adjacent;
\item The degree of the $\ring$-vertices and $\multiedge$-vertices is greater or equal than $3$, and the degree of the $\unif$-vertices is greater or equal than $4$.
\end{enumerate}

In principle, our goal is to get enumerative formulas for $\UMR$-trees, but in order to apply the Dissymmetry Theorem for trees (Section \ref{sec:prelim}) we need to encode rooted families.
For this reason we introduce the following technical definition: a $\UMR$-tree is said to be \emph{pointed} if it has a special leaf of size $0$ (namely, it does not contribute to the total amount of legs) that we call \emph{virtual leg}.
Roughly speaking, the virtual leg pinpoints its adjacent vertex which we call the \emph{pointed vertex} of the $\UMR$-tree.
We use a red triangle to graphically represent the virtual leg.
See Figure~\ref{fig:tree1} for an example of a pointed $\UMR$-tree.
\begin{figure}[htb]
\begin{center}
\includegraphics[width=9.5cm]{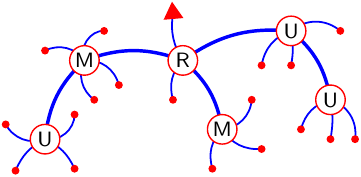}
\caption{A pointed tree with 18 legs, 1 virtual leg, and a pointed $\ring$-vertex.}
\label{fig:tree1}
\end{center}
\end{figure}

If a vertex is incident with the virtual leg, its \emph{restricted degree} is the total degree minus $1$.
Notice that $\unif$-vertices have \emph{multiplicity} due to the rank of the associated matroid. In other words, once the total degree of a $\unif$-vertex is fixed (call it $d$), then the possible rank could take any value in $\{2,3,4,\ldots, d-2\}$.
This yields $d-3$ possible different uniform matroids for this $\unif$-vertex.

In the next subsections we use ordinary generating functions to enumerate $\UMR$-trees. We first analyze the rooted case and then the unrooted case; in both cases the variable $x$ encodes non-virtual legs.

\subsection{Counting pointed $\UMR$-trees}\label{subsec:GF-counting-rooted}

We denote by $\AR(x),\, \AM(x)$ and $\AU(x)$ the generating functions for pointed trees where the virtual leg is adjacent to a $\ring$-vertex, a $\multiedge$-vertex, and a $\unif$-vertex, respectively.
Additionally, we write $A_l(x)$ for the generating function of the elementary tree pointed at a leg. Clearly, $A_l(x)=x$.
Observe that the first non-zero coefficients in the generating functions of pointed $\UMR$-trees are $[x^2]\AR(x)=[x^2]\AM(x)=1,$ and $[x^3]\AU(x)=1$.

We start getting relations between these generating functions by decomposing the trees at the pointed vertex.
Let us start with $\AR(x)$. Observe that such a tree can be described as a $\ring$-vertex (the pointed one) followed by a multiset of size greater or equal than $2$ of (the disjoint union of) trees rooted at either a $\multiedge$-vertex or at a $\unif$-vertex as shown in the following figure.

\begin{figure}[htb]
\begin{center}
\includegraphics[width=14.7 cm]{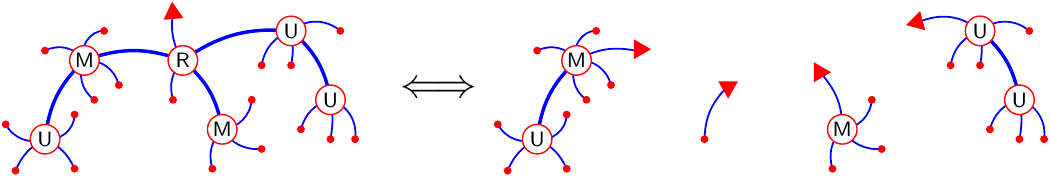}
\caption{Decomposition of a pointed $\UMR$-tree.}
\label{fig:decompofumrtree}
\end{center}
\end{figure}

The combinatorial description gives us that $\AR(x)= \mul_{\geq 2} (\AM(x)+\AU(x)+A_l(x)).$
This equation can be made explicit by means of the multiset operator:
\begin{align}\label{eq: AR}
\AR(x)= \exp\left(\sum_{r=1}^{\infty} \frac{1}{r} \left(\AM(x^r)+\AU(x^r)+A_l(x^r)\right)\right) - 1-\left(\AM(x)+\AU(x)+A_l(x)\right).
\end{align}
A similar argument holds changing the pointed $\ring$-vertex by a $\multiedge$-vertex. This gives an analogous equation for $\AM(x)$:
\begin{align}\label{eq: AM}
\AM(x)=\exp\left(\sum_{r=1}^{\infty} \frac{1}{r} \left(\AR(x^r)+\AU(x^r)+A_l(x^r)\right)\right) - 1-\left(\AR(x)+\AU(x)+A_l(x)\right).
\end{align}
Observe that Equations~\eqref{eq: AR} and~\eqref{eq: AM} give that $\AR(x)=\AM(x)$.
Indeed, by subtracting Equation \eqref{eq: AR} from Equation \eqref{eq: AM} we obtain that
$$\sum_{r\geq 1}\frac{1}{r}\AR(x^r)=\sum_{r\geq 1}\frac{1}{r}\AM(x^r).$$
These two formal power series have the same coefficients. In particular, for each choice of $n$, $[x^n]\sum_{r\geq 1}\frac{1}{r}\AR(x^r)= [x^n]\sum_{r\leq n}\frac{1}{r}\AR(x^r)$.
Now, applying an easy induction argument we can conclude that for each $n$, $[x^n]\AR(x)=[x^n]\AM(x)$.

Getting formulas for $\AU(x)$ is slightly more involved: if the pointed $\unif$-vertex has total degree $d$, then it has multiplicity $d-3$.
This fact must be encoded in the counting formulas.
Let us use an auxiliary variable $u$ which marks the restricted degree of the pointed $\unif$-vertex (namely, the total degree $d$ minus $1$).
Here we emphasize that we do \emph{not} consider the contribution of the virtual leg to the total number of legs $n$. This is due to technical reasons that are going to be clear while proceeding with the counting.
However, the multiplicity of the pointed $\unif$-vertex must be considered with respect to the total degree of the vertex (thus including the virtual leg) and not with respect to the restricted degree.
Indeed, for a pointed $\unif$-vertex of degree $d$, its restricted degree is equal to $r=d-1$, and its multiplicity is equal to $d-3=r-2$.

We write $a_{n,r}$ for the number of pointed trees with $n$ non-virtual legs whose virtual leg is adjacent to a $\unif$-vertex of restricted degree $r$. The notation $\aU(x,u):=\sum_{n,\,r\geq 3}a_{n,r}x^n u^r$ refers to the corresponding generating function. Then we have
\begin{equation}\label{eq: AU}
 \AU(x)= \left.\sum_{n,r\geq 3}(r-2) a_{n,r}x^n u^r\right|_{u=1}=\left.\frac{\partial}{\partial u}\aU(x,u)\right|_{u=1}- 2\aU(x,1)
\end{equation}
Observe now that $\aU(x,u)$ satisfies the equation $\aU(x,u)=\mul_{\geq 3} (u(\AM(x)+\AR(x)+\AU(x)+A_l(x)))$, which arises from the fact that the pointed $\unif$-vertex has restricted degree $\geq 3$ (or equivalently, degree $\geq 4$). Hence we have that
\begin{align*}
\aU(x,u)&=\exp\left(\sum_{r=1}^{\infty} \frac{u^r}{r} \left(\AR(x^r)+\AM(x^r)+\AU(x^r)+A_l(x^r)\right)\right)\\
&- 1-u\left(\AR(x)+\AM(x)+\AU(x)+A_l(x)\right)-\mul_{ 2} (u(\AM(x)+\AR(x)+\AU(x)+A_l(x))).
\end{align*}
Now by using Equation~\eqref{eq: AU} we can write $\AU(x)$ in terms of $\aU(x,1)$ and its derivative at $u=1$:
\begin{align}\label{eq: AU2}
\AU(x)&=\exp\left(\sum_{r=1}^{\infty} \frac{1}{r} \left(\AR(x^r)+\AM(x^r)+\AU(x^r)+x^r\right)\right)\left(\sum_{r=1}^{\infty}  \left(\AR(x^r)+\AM(x^r)+\AU(x^r)+x^r\right)\right)\\ \nonumber
&-\left(\AR(x)+\AM(x)+\AU(x)+x\right)-2\mul_{ 2}(\AM(x)+\AR(x)+\AU(x)+x)\\ \nonumber
&-2\exp\left(\sum_{r=1}^{\infty} \frac{1}{r} \left(\AR(x^r)+\AM(x^r)+\AU(x^r)+x^r\right)\right)+\\ \nonumber
&+2+2\left(\AR(x)+\AM(x)+\AU(x)+x\right)+2\mul_{ 2}(\AM(x)+\AR(x)+\AU(x)+x).
\end{align}

Hence, we have three equations relating $\AR(x)$, $\AM(x)$ and $\AU(x)$.
\\

\subsection{Application of the Dissymmetry Theorem}~\label{subsec:GF-unrooted}
We now proceed by applying the Dissymmetry Theorem for trees (see Subsection~\ref{sub:prelim_enumeration}) in order to express $\UMR$-trees in terms of rooted ones. Let $T(x)$ be the generating function of $\UMR$-trees, where $x$ marks legs. Write $T_{v}(x)$, $T_e(x)$, and $T_d(x)$ the generating functions associated to families of $\UMR$-trees with a rooted vertex, a rooted edge and a rooted and oriented edge, respectively. By the Dissymmetry Theorem for trees stated in Equation~\eqref{eq:dys-theorem-abstract}, we have that
\begin{equation}\label{eq:dys-tree}
T(x)=T_{v}(x)+T_e(x)-T_d(x).
\end{equation}
Let us compute each generating function in terms of the pointed families obtained in Subsection~\ref{subsec:GF-counting-rooted}. Let us start with $T_e(x)$. This can be written as:
$$T_e(x)=T_{\multiedge-\ring}(x)+T_{\multiedge-\unif}(x)+T_{\multiedge-\bullet}(x)+T_{\ring-\unif}(x)+T_{\ring-\bullet}(x)+T_{\unif-\unif}(x)+T_{\unif-\bullet}(x)$$
where the index of each term shows the type of the end vertices of the rooted edge (for instance, the first term $\ring{-}\multiedge$ means that the rooted edge has as end vertices a $\ring$-vertex and a $\multiedge$-vertex).
By cutting the rooted edge and pasting two virtual legs on the ends (see Figure~\ref{fig:tree2}), each term in the sum (with the exception of $T_{\unif-\unif}(x)$, which has an additional symmetry) is the product of the corresponding generating functions of pointed families.
\begin{figure}[htb]
\begin{center}
\includegraphics[width=14.5cm]{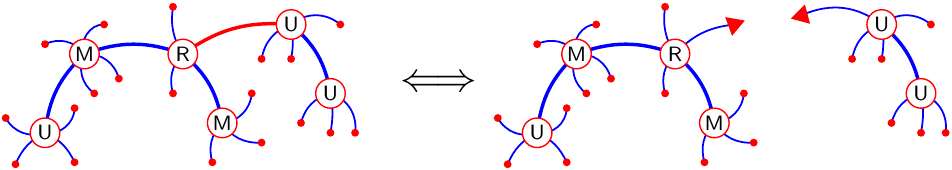}
\caption{A $\UMR$-tree rooted at an edge (colored red), and its decomposition in terms of pointed trees.}
\label{fig:tree2}
\end{center}
\end{figure}

The single situation where symmetry exists is in $T_{\unif-\unif}(x)$, and in this case we have a multiset of size $2$ of trees pointed at a $\unif$-vertex. We conclude that
\begin{equation}\label{eq:Te}
T_e(x)=\AM(x)(\AR(x)+\AU(x)+A_l(x))+\AR(x)(\AU(x)+A_l(x))+\mul_2(\AU(x))+A_l(x)\AU(x).
\end{equation}
A decomposition similar to the one of Equation~\eqref{eq:Te} applies for $T_d(x)$. Indeed this generating function can be written as:
\begin{align*}
T_d(x)&= T_{\multiedge \rightarrow \ring}(x)+T_{\multiedge\rightarrow \unif}(x)+T_{\multiedge\rightarrow \bullet}(x)\\
      &+T_{\ring\rightarrow \multiedge}(x)+T_{\ring\rightarrow \unif}(x)+T_{\ring \rightarrow \bullet}(x)\\
      &+T_{\unif\rightarrow \multiedge}(x)+T_{\unif\rightarrow \ring}(x)+T_{\unif\rightarrow \unif}(x)+T_{\unif\rightarrow \bullet}(x)\\
      &+T_{\bullet\rightarrow \multiedge}(x)+T_{\bullet\rightarrow \ring}(x)+T_{\bullet\rightarrow \unif}(x)
\end{align*}
where the index of each term shows the type of the end vertices for the rooted directed edge. In this situation the computations are similar and even easier, because there is no extra symmetry when dealing with an edge linking two $\unif$-vertices:
\begin{align}\label{eq:Td}
T_d(x)&=A_{M}(x)(A_{R}(x)+A_{U}(x)+A_l(x))\\ \nonumber
      &+ A_{R}(x)(A_{M}(x)+A_{U}(x)+A_l(x))\\ \nonumber
      &+A_{U}(x)(A_{M}(x)+A_{R}(x)+A_{U}(x)+A_l(x))\\ \nonumber
      &+A_l(x)(A_{R}(x)+A_{M}(x)+A_{U}(x)). \nonumber
\end{align}
The last generating function we want to get is $T_v(x).$
Observe that $T_v(x)$ is not the sum of the generating functions obtained in Subsection~\ref{subsec:GF-counting-rooted}, because now we do not have to consider the virtual leg.
We write
\begin{equation}\label{eq:Tv}
T_v(x)=T_\ring(x)+T_\multiedge(x)+T_\unif(x)+T_{\bullet}(x)
\end{equation}
where the index of each term indicates the type of the rooted vertex.
We want to express now each term by means of the previous pointed families.
It is obvious that
\begin{equation}\label{eq:Tbullet}
T_{\bullet}(x)=A_l(x)(A_\ring(x)+A_\multiedge(x)+A_\unif(x))
\end{equation}
because a rooted leg induces canonically a rooted edge.
Let us consider the other situations: observe that $T_\ring(x)=\mul_{\geq 3} (\AM(x)+\AU(x)+A_l(x))$, which is obtained by cutting the edges incident with the rooted $\ring$-vertex, and pasting a virtual leg to each resulting subtree. In particular
\begin{align}\label{eq:TR}
T_\ring(x)&=\mul_{\geq 3} (\AM(x)+\AU(x)+A_l(x))\\\nonumber
      &=\mul_{\geq 2} (\AM(x)+\AU(x)+A_l(x))-\mul_{2} (\AM(x)+\AU(x)+A_l(x))\\\nonumber
      &=\AR(x)-\mul_{2} (\AM(x)+\AU(x)+A_l(x))\nonumber
\end{align}
and, mutatis mutandis, an analogous expression holds for $T_\multiedge(x)$.
At last, let us study $T_\unif(x)$: let $\tU(x,u)$ be the generating function of trees with a rooted $\unif$-vertex, where the multiplicity of the rooted vertex is not encoded yet and $u$ encodes the degree of the rooted $\unif$-vertex. Then, $\tU(x,u)= \mul_{\geq 4}(u(\AR(x)+\AM(x)+\AU(x)+A_l(x)))$ and
\begin{align*}\tU(x,u)&=\sum_{n,d\geq 4}t_{n,d}x^n u^d \quad \Longrightarrow \quad \TU(x)= \left.\sum_{n,d\geq 4}(d-3) t_{n,d}x^n u^d\right|_{u=1}\\
&=\left.\frac{\partial}{\partial u}\tU(x,u)\right|_{u=1}- 3\tU(x,1).
\end{align*}
Applying the same trick we used for $\aU(x,u)$ in Subsection~\ref{subsec:GF-counting-rooted}, we get that
\begin{align}\label{eq:TU}
\TU(x)&=\left. \frac{\partial}{\partial u}\tU(x,u)\right|_{u=1}- 3\tU(x,1)\\ \nonumber
      &= \left. \left(\frac{\partial}{\partial u}-3\right) (\aU(x,u)-u^3 \mul_{3}(\AR(x)+\AM(x)+\AU(x)+A_l(x)))\right|_{u=1}\\ \nonumber
      &= \AU(x)-\aU(x,1)+(3-3)\mul_{3}(\AR(x)+\AM(x)+\AU(x)+A_l(x)))\\ \nonumber
      &=\AU(x)-\mul_{\geq 3}(\AR(x)+\AM(x)+\AU(x)+A_l(x))).\nonumber
\end{align}

Substituting Equations \eqref{eq:Tbullet},\eqref{eq:TR} and \eqref{eq:TU} in \eqref{eq:Tv} we get the expression for $T_{v}(x)$.
Finally, we replace \eqref{eq:Te}, \eqref{eq:Td} and this expression of $T_v(x)$ in Equation \eqref{eq:dys-tree}.
All together this brings us the generating function $T(x)$, whose first coefficients are $2x^3+4x^4+10x^5+27x^6+78x^7+246x^8+818x^9+2871x^{10}+10446x^{11}+39358x^{12}+\dots$

\subsection{Asymptotic analysis}\label{subsec:asympt}
Now we can apply the machinery arising from analytic combinatorics in order to get asymptotic estimates for $[x^n]T(x)$. The main point is based on studying the system of equations which defines $\AR(x), \AM(x)$ and $\AU(x)$, which provides the position and the nature of the dominant singularity of $T(x)$.

In particular, by means of the Drmota-Lalley-Woods methodology (see Subsection \ref{subsec: prelim-asympt}), we obtain the constant growth, which is $\rho^{-1}\approx 4.88052854$ (whose inverse $\rho\approx 0.20489584$ gives the radius of convergence around the origin of the generating function).
Possibly more important, we can show that all these generating functions have the same square-root singularity (see the details in the proof).
Moreover, the generating function $T(x)$ is an analytic expression of the previous counting formulas, hence the position of the singularity does not change. However, the type of the singularity changes due to a combinatorial cancellation arising from the Dissymmetry Theorem for trees applied to $\UMR$-trees.
Finally, the asymptotic estimates for the coefficients of $T(x)$ are deduced by means of the Transfer Theorem for singularity analysis (see Theorem \ref{theo:transfer}).

Before presenting the proofs, it is worth comparing the growth constant we get with the one arising in the context of unlabelled $2$-connected series-parallel graphs, which is the analogue in the graphical setting. In~\cite{DrmotaFusyKangKrausRue} it is proven that the number of unlabelled $2$-connected series-parallel graphs with $n$ vertices grows exponentially as $\gamma^{-n}$, where $\gamma\approx 0.12419991$ (and $\gamma^{-1}\approx 8.05153567$). Despite several similarities, there are few caveats that we have to keep into account:
\begin{enumerate}[\rm(i)]
\item matroids do not have a vertex structure; instead we count them by the number of elements in the ground set, which will also pay off when relating our results to the enumeration of $2$-level base polytopes;
\item the tree decompositions of series-parallel graphs have only $\ring$-vertices and $\multiedge$-vertices. General $2$-level matroids are constructed using also the $\unif$-vertices, that is, a much wider variety of building blocks.
\item  series-parallel graphs with different graph realizations can correspond to isomorphic matroids and must be counted only once in the matroid setting.
\end{enumerate}

%Roughly speaking, this tells us that despite that in the non-graphical setting we have a wider variety of fundamental atoms that can be used (namely, $\unif$-vertices), the main contribution to the asymptotic in the graphical setting arises from the ordered operation arising when gluing rings (in the matroid case the order does not play a role since we lose information due to the non-graphical representation).

The first result deals with the singular behaviour of $\AR(x), \AM(x)$ and $\AU(x)$:

\begin{proposition}\label{prop:main}
The generating functions $\AR(x), \AM(x)$ and $\AU(x)$ have a dominant singularity at $\rho\approx 0.20489584$. Additionally, this is the unique singularity in the region $\{x\in\mathbb{C}: |x|\leq \rho\}$. In a domain dented at $\rho$, $\AR(x)$, $\AM(x)$ and $\AU(x)$ have a singular expansion of the form
\begin{align*}
\AR(X)=\AM(X) =&\,\, A_0+A_1X+A_2X^2+A_3X^3+O(X^4), \\
       \AU(X) =& \,\, U_0+U_1X+U_2X^2+U_3X^3+O(X^4),
\end{align*}
 where $X=\sqrt{1-x/\rho}$, $A_0\approx 0.13529174$, $A_1\approx -0.23137622$, $A_2\approx 0.04653888$, $A_3\approx 0.06281332$, $U_0\approx 0.06921673$, $U_1\approx -0.19340420$, $U_2\approx 0.15045323$ and $U_3\approx 0.01018058$.

\end{proposition}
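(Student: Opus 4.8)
The plan is to regard Equations~\eqref{eq: AR}, \eqref{eq: AM} and~\eqref{eq: AU2} as a system of functional equations for the vector $\y(x)=(\AR(x),\AM(x),\AU(x))$ and to apply Theorem~\ref{thm:drmota}. As already noted in Subsection~\ref{subsec:GF-counting-rooted}, subtracting~\eqref{eq: AR} from~\eqref{eq: AM} forces $\AR(x)=\AM(x)$, so I would first reduce to the two unknowns $\AR(x)$ and $\AU(x)$. The one genuine complication is that the right-hand sides contain the P\'olya-type tails $\sum_{r\ge 2}\tfrac1r\AR(x^r)$ and $\sum_{r\ge 2}\tfrac1r\AU(x^r)$, so the system is not literally of the finite shape $\y=\textbf{F}(x;\y)$ required by Theorem~\ref{thm:drmota}. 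To handle this, I would first establish a crude a priori exponential lower bound on $[x^n]\AR(x)$ (immediate from the construction: the freedom in attaching legs and in choosing ranks of $\unif$-vertices already produces $\Omega(c^n)$ trees for some $c>1$), so that the common radius of convergence $\rho$ of $\AR=\AM$ and $\AU$ satisfies $\rho<1$. Then each $\AR(x^r)$ and $\AU(x^r)$ with $r\ge 2$ is analytic in the disc $|x|<\rho^{1/2}$, which strictly contains the closed disc $|x|\le\rho$, and the sums over $r\ge 2$ are there locally uniformly convergent series of analytic functions, hence analytic. Consequently, near $\rho$ these tails contribute only a fixed analytic-in-$x$ perturbation, and the system can be rewritten in the form $\y=\textbf{F}(x;\y)$ with $\textbf{F}$ analytic around $(0,\0)$ and with nonnegative Taylor coefficients there.

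With the system in this form, I would verify the remaining hypotheses of Theorem~\ref{thm:drmota}. Each component is analytic at $x=0$ and vanishes there (indeed $[x^2]\AR=[x^2]\AM=1$ and $[x^3]\AU=1$), and the dependency graph of the reduced system is strongly connected: from~\eqref{eq: AR} (with $\AM$ replaced by $\AR$) one sees that $\AR$ depends on both $\AR$ and $\AU$, and from~\eqref{eq: AU2} that $\AU$ depends on both $\AR$ and $\AU$, giving a strongly connected two-vertex dependency graph. It then remains to locate the positive real solution of the characteristic system
\[
\begin{cases}
\y=\textbf{F}(x;\y),\\
0=\det\!\left(\textbf{I}_{k}-\mathrm{Jac}(\textbf{F})\right),
\end{cases}
\]
and to check that it is the only one in the region of analyticity. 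Solving numerically yields $x=\rho\approx 0.20489584$ together with $\AR(\rho)=\AM(\rho)\approx A_0$ and $\AU(\rho)\approx U_0$; uniqueness of the admissible positive root follows by the usual monotonicity argument, since $\AR$ and $\AU$ are increasing with finite limits on $[0,\rho)$ while the determinant is positive at the origin and first vanishes at $\rho$.

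Theorem~\ref{thm:drmota} then gives that $\AR(x)=\AM(x)$ and $\AU(x)$ admit expansions in $X=\sqrt{1-x/\rho}$ in a domain dented at $\rho$, and that they are analytic for $|x|<\rho$; hence $\rho$ is the unique singularity in $\{|x|\le\rho\}$ once we know it is the only singularity on its own circle of convergence, which is the aperiodicity of the system, clear because, e.g., $T(x)=2x^3+4x^4+10x^5+27x^6+\dots$ has nonzero coefficients in consecutive degrees (the same holds for $\AR$ and $\AU$). Finally, the explicit constants $A_0,\dots,A_3$ and $U_0,\dots,U_3$ are obtained by substituting $x=\rho\,(1-X^2)$ into the finite analytic system, expanding both sides as power series in $X$ around $X=0$, and solving order by order: the $X^0$ term recovers $(A_0,U_0)$ from the characteristic system, the vanishing of $\det(\textbf{I}_{k}-\mathrm{Jac}(\textbf{F}))$ at $\rho$ is exactly what makes the following order determine the square-root coefficients $(A_1,U_1)$, and the higher coefficients are then pinned down by straightforward linear algebra. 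This computation is routine, if lengthy, and certifies the stated numerical values.

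I expect the main obstacle to be the first step: turning the infinite P\'olya-type system into a finite analytic one to which Theorem~\ref{thm:drmota} applies verbatim. This requires the a priori bound $\rho<1$ (so that all the substituted series $\AR(x^r),\AU(x^r)$ with $r\ge 2$ converge on a neighbourhood of $\{|x|\le\rho\}$) together with a clean argument that the resulting perturbation is analytic near $\rho$ and hence leaves the square-root nature of the singularity unchanged. The other delicate point is the numerical certification that the characteristic system has a unique positive real root in the region of analyticity, which underpins both the location $\rho\approx 0.20489584$ and the values of the expansion constants.
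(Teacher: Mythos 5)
Your proposal follows essentially the same route as the paper: reduce to the two unknowns $\AR=\AM$ and $\AU$ via Equations~\eqref{eq: AR} and~\eqref{eq: AU2}, treat the P\'olya tails $\sum_{r\ge 2}\tfrac1r\AR(x^r)$, $\sum_{r\ge 2}\tfrac1r\AU(x^r)$ as analytic perturbations (replaced by truncated Taylor series), apply Theorem~\ref{thm:drmota} with a numerical solution of the characteristic system to locate $\rho$ and obtain the square-root expansions, and then determine the constants $A_i,U_i$ by substituting the expansions and matching coefficients. The extra details you supply (the a priori bound $\rho<1$ justifying analyticity of the tails near $\rho$, strong connectivity of the dependency graph, and the aperiodicity argument for uniqueness on $|x|=\rho$) are sound refinements of the same argument, so the proposal is correct.
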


\begin{proof}
As we know that $\AR(x)=\AM(x)$, we just need to analyze the pair of equations \eqref{eq: AR} and \eqref{eq: AU2}.
Indeed if $\AR(x)$ and $\AU(x)$ have a unique singularity $\rho$, then the term
\begin{equation*}
\exp\left(\sum_{r=2}^{\infty} \frac{1}{r} \left(\AM(x^r)+\AU(x^r)+A_l(x^r)\right)\right).
\end{equation*}
in equation~\eqref{eq: AR} is analytic at $x=\rho$ (similarly in Equation~\eqref{eq: AU2}). Hence, we can approximate this term by its Taylor series (which can be computed by an iterative algorithm). As a result, we obtain a pair of functional equations in $x$, $\AR(x)$, and $\AU(x)$ satisfying the conditions of Theorem~\ref{thm:drmota}. Solving now the resulting system of 3 equations by means of \texttt{Maple} computations (namely, the two equations and the one associated to the jacobian matrix in Equation \eqref{eq:drmota}), we obtain the solution $x_0\approx 0.20489584$, $\solAR\approx 0.13529174$ and $\solAU\approx 0.06921673$. By Theorem~\ref{thm:drmota} the position of the singularity of both $\AR(x)$ and $\AU(x)$ is located at $\rho=x_0\approx 0.20489584$, and both $\AR(x)$ and $\AU(x)$ have a square-root expansion in a domain dented at $\rho$ of the form
\begin{align*}
\AR(X)=\AM(X) =&\,\, A_0+A_1X+A_2X^2+A_3X^3+O(X^4), \\
       \AU(X) =& \,\, U_0+U_1X+U_2X^2+U_3X^3+O(X^4)
\end{align*}
where $A_i,\,U_i$, $i\in\{1,2,3,4\}$ are computable constants.
In order to get approximate values of these constants, we substitute the square-root expansions of $\AM(x)=\AR(x)$ and $\AU(x)$ in equations \eqref{eq: AR} and \eqref{eq: AU2}. The terms of the form $\AM(x^r)=\AR(x^r)$ and $\AU(x^r)$ ($r\geq 2$) are also approximated by a truncation of the Taylor series (which can also be computed by an iterative algorithm), because these GFs are analytic at the point $x=\rho$. At this point we can get a system of equations in the $A_i$'s and the $U_i$'s by equating the coefficients with same degree of the square-root expansions. Solving this system yields the constants reported in the statement of the theorem.
\end{proof}

More precisely, we get the following result for $[x^n]T(x)$:
\begin{theorem}\label{thm:enum-trees}
The following asymptotic estimate holds:
$$[x^n]T(x)=C\cdot n^{-5/2}\cdot \rho^{-n}\,\,(1+o(1))$$
where $C\approx 0.07583455$ and $\rho\approx 0.20489584$ are computable constants.
\end{theorem}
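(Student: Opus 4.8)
The plan is to deduce the estimate for $[x^n]T(x)$ from the singular expansions of $\AR(x)$, $\AM(x)$, $\AU(x)$ obtained in Proposition~\ref{prop:main}, combined with the explicit algebraic expressions \eqref{eq:Te}, \eqref{eq:Td}, \eqref{eq:Tbullet}, \eqref{eq:TR}, \eqref{eq:TU} for the rooted families and the Dissymmetry identity \eqref{eq:dys-tree}. The key point is that $T(x) = T_v(x) + T_e(x) - T_d(x)$ is a \emph{polynomial} expression (with multiset corrections that are themselves analytic combinations) in $\AR(x)$, $\AM(x)$, $\AU(x)$ and $A_l(x) = x$, together with the substituted terms $\AR(x^r)$, $\AU(x^r)$ for $r\geq 2$ which are all analytic at $x=\rho$. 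Hence $\rho$ remains the unique dominant singularity of $T(x)$, and all that is needed is to compute the leading term of the singular expansion of $T$ in the variable $X = \sqrt{1-x/\rho}$.

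The first step is to write each of $T_v$, $T_e$, $T_d$ as an explicit function $G_v, G_e, G_d$ of $\AR(x), \AM(x), \AU(x), x$ and the analytic tails; here one uses $\AR(x) = \AM(x)$ to reduce the number of variables. Substituting the expansions $\AR(X)=\AM(X) = A_0 + A_1 X + A_2 X^2 + A_3 X^3 + O(X^4)$ and $\AU(X) = U_0 + U_1 X + U_2 X^2 + U_3 X^3 + O(X^4)$, and Taylor-expanding $x = \rho(1-X^2)$, each of $T_v, T_e, T_d$ acquires an expansion in powers of $X$. The essential phenomenon — already flagged in the discussion preceding Proposition~\ref{prop:main} — is a combinatorial cancellation: the coefficients of $X^0$ and $X^1$ in $T_v + T_e - T_d$ vanish, and moreover the coefficient of $X^2$ also vanishes, so that the first surviving term is of order $X^3$. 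This is forced by the Dissymmetry Theorem, which guarantees $T(x)$ counts genuine unrooted trees and therefore cannot inherit the $X^1$ (square-root) singularity of the rooted pieces; the further vanishing of the $X^2$ term is the extra cancellation that produces the $X^3 = (1-x/\rho)^{3/2}$ behaviour. I would verify these cancellations by differentiating the defining functional equations \eqref{eq: AR} and \eqref{eq: AU2} to get relations among $A_0, A_1$ and $U_0, U_1$ (in particular a Jacobian-type relation at the singularity), and then checking that $G_v + G_e - G_d$, as a function of the expansion coefficients, annihilates the low-order terms identically — not just numerically. Concretely, write $T(X) = C' X^3 + O(X^4)$ with $C'$ a computable constant expressible in $A_0,\dots,A_3, U_0,\dots,U_3$ and the analytic-tail values at $\rho$.

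Finally, having established $T(x) = C'\bigl(1 - x/\rho\bigr)^{3/2} + O\bigl((1-x/\rho)^2\bigr)$ in a domain dented at $\rho$ (analyticity in such a $\Delta$-domain follows from Theorem~\ref{thm:drmota}, which yields a square-root expansion hence analytic continuation past the circle $|x|=\rho$ except at $\rho$ itself), I apply the Transfer Theorem for a unique dominant singularity (Theorem~\ref{theo:transfer}) with $\alpha = -3/2$. Since $1/\Gamma(-3/2) = 4/(3\sqrt{\pi})$, this gives
\[
[x^n]T(x) = C' \cdot \frac{1}{\Gamma(-3/2)} \cdot n^{-5/2} \cdot \rho^{-n}\,(1+o(1)) = C\cdot n^{-5/2}\cdot \rho^{-n}\,(1+o(1)),
\]
with $C = C'/\Gamma(-3/2) = 4C'/(3\sqrt{\pi}) \approx 0.07583455$, and $\rho \approx 0.20489584$ as in Proposition~\ref{prop:main}. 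The main obstacle is the bookkeeping needed to confirm the triple cancellation ($X^0$, $X^1$, $X^2$ coefficients all zero) cleanly rather than relying on numerics: one must organise the substitutions so that the structural identities coming from the Dissymmetry Theorem and from differentiating \eqref{eq: AR}, \eqref{eq: AU2} at $\rho$ make the vanishing manifest, and then extract the surviving constant $C'$ — all other steps (Pringsheim, the transfer, the Gamma-value) are routine once this is in place.
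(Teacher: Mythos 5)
Your overall route is the paper's: substitute the square-root expansions of $\AR(x)=\AM(x)$ and $\AU(x)$ from Proposition~\ref{prop:main} into the Dissymmetry identity \eqref{eq:dys-tree} via \eqref{eq:Te}--\eqref{eq:TU}, observe that the square-root term cancels, and transfer. However, your central structural claim -- a ``triple cancellation'' in which the coefficients of $X^0$, $X^1$ \emph{and} $X^2$ all vanish, so that $T(x)=C'(1-x/\rho)^{3/2}+O\bigl((1-x/\rho)^2\bigr)$ -- is false. Only the coefficient of $X$ vanishes (this is the cancellation caused by the unrooting in the Dissymmetry Theorem); the actual expansion is $T(x)=T_0+T_2X^2+T_3X^3+O(X^4)$ with $T_0\approx 0.03457946\neq 0$ and $T_2\approx -0.18596384\neq 0$. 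Your plan to verify the vanishing of the $X^0$ and $X^2$ coefficients ``identically, not just numerically'' would therefore fail, and there is no reason to expect such vanishing: those terms are a constant plus a multiple of $(1-x/\rho)$, i.e.\ a polynomial in $x$, which contributes nothing to $[x^n]T(x)$ for $n\geq 2$. The correct way to finish is to subtract this analytic part and apply Theorem~\ref{theo:transfer} (with $\alpha=-3/2$) to $T(x)-T_0-T_2(1-x/\rho)$, which has the same coefficients as $T(x)$ for $n\geq 2$; this is exactly what the paper does, with $C$ computed from $T_3$ rather than from a hypothetical leading coefficient $C'$ of a purely singular expansion.

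There is also an arithmetic slip in the last step: $\Gamma(-3/2)=\tfrac{4}{3}\sqrt{\pi}$, so $1/\Gamma(-3/2)=\tfrac{3}{4\sqrt{\pi}}\approx 0.42314$, not $\tfrac{4}{3\sqrt{\pi}}$. With the correct value one indeed gets $C=T_3/\Gamma(-3/2)\approx 0.17921766\cdot 0.42314\approx 0.07583455$, matching the statement; with your value the constant would come out roughly $0.135$ and would not match. Once you replace the claimed triple cancellation by the single $X^1$-cancellation (plus the standard removal of the analytic polynomial part) and fix the Gamma value, your argument coincides with the paper's proof.
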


\begin{proof}
We use the singular square-root expansions for $\AR(x)$, $\AM(x)$ and $\AU(x)$ obtained in Lemma~\ref{prop:main}, together with the expressions in equations \eqref{eq:dys-tree}-\eqref{eq:TU} in order to get the singular expansion of $T(x)$:
$$T(x)=T_0+T_2 X^2+T_3 X^3 +O(X^4),$$
with $T_0\approx 0.03457946$, $T_2\approx -0.18596384$ and $T_3\approx 0.17921766$.
Observe that the constant multiplying $X$ in this singular expansion is equal to $0$ (due to the unrooting process in the Dissymmetry Theorem for trees). Finally we apply the Transfer Theorem for singularity analysis over this singular expansion.
\end{proof}

\subsection{Dealing with duality. Proof of Theorem \ref{thm:main}}\label{subsec:selfdual}
The last part is devoted to show that the contribution of self-dual 2-level matroids is exponentially small compared to the estimates we obtained in the previous subsection.
Let $\M$ be a matroid with tree decomposition $T_\M$, then Proposition~\ref{prop:dual_matroid} implies that the tree decomposition of $\M^*$ has the same tree structure of $T_\M$. Moreover, we replace each vertex label $\NN_i$ with its dual matroid $\NN_i^*$.

We are interested in self-dual $2$-connected $2$-level matroids. The vertex labels are chosen among uniform matroids, and the operation of duality turns labels of type $\multiedge_n$ into labels of type $\ring_n$ and vice versa, and $U_{n,k}$-labels into $U_{n,n{-}k}$-labels. It is clear that the self-dual labels are of the form $U_{2n,n}$. Moreover, for technical reasons, we consider also virtual legs and legs to be self-dual.

Our goal is to estimate the contribution of the family of self-dual $\UMR$-trees (namely $\UMR$-trees associated to self-dual matroids) to the total number of $\UMR$-trees. To do that we start analyzing the pointed situation: we write $\AR(x)=\SR(x)+\NR(x)$, $\AM(x)=\SM(x)+\NM(x)$ and $\AU(x)=\SU(x)+\NU(x)$, where the generating functions $\SR(x),\,\SM(x)$ and $\SU(x)$ encode self-dual trees whose pointed vertex is a $\ring$-vertex, a $\multiedge$-vertex and a $\unif$-vertex, respectively.
The generating functions $\NR(x),\,\NM(x)$ and $\NU(x)$ are the ones encoding trees which are not self-dual.
Observe that in particular $\SR(x)=\SM(x)=0$, because the dual of each $\ring$-vertex is an $\multiedge$-vertex, and consequently there are no self-dual trees pointed at either a $\ring$-vertex or a $\multiedge$-vertex.

We also use a similar notation for unrooted trees.
We write $T(x)=S(x)+N(x)$, where $S(x)$ is the generating function associated to self-dual (unrooted) trees.

The next lemma tells us that the contribution of self-dual rooted trees is exponentially small.

\begin{lemma} \label{lem:self-rooted-trees}
The following estimate holds:

$$[x^n]\SU(x)= o([x^n]\AU(x)).$$

\end{lemma}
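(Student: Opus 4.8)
\textbf{Proof plan for Lemma~\ref{lem:self-rooted-trees}.}
The plan is to set up a functional equation (or system) for $\SU(x)$ parallel to the one for $\AU(x)$, and then argue that its dominant singularity is strictly larger than $\rho$. The key structural observation is that a pointed $\UMR$-tree corresponding to a self-dual matroid must be ``symmetric under duality'': the pointed vertex must be a $\unif$-vertex of the self-dual type $U_{2n,n}$ (since $\SR=\SM=0$), and the multiset of subtrees hanging off it must be invariant under the involution that dualizes every vertex label (turning $\multiedge$ into $\ring$, $U_{n,k}$ into $U_{n,n-k}$, fixing legs and self-dual $\unif$-vertices). Concretely, the subtrees split into pairs $\{\mathcal{U}, \mathcal{U}^*\}$ of mutually dual non-self-dual trees, together with a sub-multiset of self-dual subtrees. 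First I would write this decomposition out: if $D(x)$ denotes the GF of (rooted, non-virtual-leg) dual \emph{pairs} of subtrees — so roughly $D(x)$ is obtained by substituting $x\mapsto x$ but counting each pair once, which forces $D(x) \leq \tfrac12(\AM(x)+\AR(x)+\AU(x)+A_l(x))^2$ at the level of coefficients, hence $D$ has radius of convergence $\geq \rho$ and no singularity on $|x|<\rho$ — and $\SU(x)$, $A_l(x)$ are the self-dual subtree GFs, then $\SU(x)$ satisfies an equation of multiset type built from $D(x) + \SU(x) + A_l(x)$ with the restricted-degree bookkeeping (the $u$-marking and the $(d-3)$ multiplicity) exactly as in the derivation of \eqref{eq: AU2}.

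The heart of the argument is then an analytic one. The right-hand side of the $\SU$-equation, viewed as $\SU = \Phi(x, \SU(x))$, has all its ``input'' functions ($D(x)$, the higher-power terms $D(x^r)$, $\SU(x^r)$ for $r\geq 2$, and $x^r$) analytic at $x = \rho$: for $D$ this is the coefficient-domination bound just mentioned, and for the $r\geq 2$ terms it is the usual fact that $\rho < 1$ so $\rho^r < \rho$ lies well inside the disk of convergence. Thus near $x=\rho$ the equation for $\SU$ is a genuine (smooth) fixed-point equation with analytic data, and $\SU$ can only become singular either where $\Phi$ itself becomes singular in $x$ — which does not happen before $\rho$ — or at a branch point where $\partial_{\SU}\Phi = 1$. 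I would show the latter does not occur on $(0,\rho]$ by comparing with the $\AU$ system: since $\SU(x) \leq \AU(x)$ coefficient-wise and the same monotone relation holds for all the building blocks ($D(x) + \SU(x) + A_l(x) \leq \AR(x)+\AM(x)+\AU(x)+A_l(x)$, using $D(x)\leq \tfrac12(\cdots)^2 \leq$ the full sum in the relevant coefficient range — here one must be a little careful, and the cleanest route may be to bound $D$ directly by the product of two copies of the pointed-tree GF), the derivative $\partial_{\SU}\Phi$ evaluated along the $\SU$-branch stays below the corresponding derivative for the $\AU$-system, which by Proposition~\ref{prop:main} and Theorem~\ref{thm:drmota} first reaches $1$ exactly at $\rho$; strictness of the domination at the relevant coefficients then pushes the branch point of $\SU$ strictly past $\rho$.

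Consequently $\SU(x)$ is analytic on a disk $|x| \leq \rho'$ with $\rho' > \rho$, so $[x^n]\SU(x) = O(\rho'^{-n})$ with $\rho'^{-1} < \rho^{-1}$, whereas $[x^n]\AU(x) = \Theta(n^{-3/2}\rho^{-n})$ by Proposition~\ref{prop:main} (extracting the $X^1$ coefficient $U_1 \neq 0$ via the Transfer Theorem). Dividing gives $[x^n]\SU(x) = o([x^n]\AU(x))$, which is the claim. I expect the main obstacle to be the analytic bookkeeping that shows the self-dual system does \emph{not} inherit the square-root singularity at $\rho$ — i.e.\ ruling out that $\partial_{\SU}\Phi = 1$ somewhere on $(0,\rho]$; this is where one needs the coefficient-domination $\SU \leq \AU$ together with strict inequality in enough coefficients, and possibly an explicit check that the ``pair'' generating function $D$ really does have radius of convergence exceeding $\rho$ rather than equal to it (it is a diagonal-type extraction from $\AU(x)\AU(y)$-style series, so its singularity is governed by $\rho^2$-type behaviour, comfortably inside $|x|<\rho$). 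Once that is in place the rest is the routine singularity-analysis transfer already used for Theorem~\ref{thm:enum-trees}.
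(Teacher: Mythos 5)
Your structural decomposition is the same one the paper uses (the pointed vertex of a self-dual pointed tree must be a $\unif$-vertex of type $U_{2m,m}$, and the pending subtrees split into mutually dual pairs plus self-dual ones), but the quantitative heart of the argument is missing and partly mis-stated. The paper's proof hinges on the observation that an unordered dual pair $\{T,T^*\}$ in which $T$ has $n$ legs contributes $x^{2n}$, so the class of pairs is encoded by substituting $x^2$ into the pointed-tree series: the ``pair'' input in the self-dual equation is $\AR(x^2)+\AM(x^2)+(\AU(x^2)-\SU(x^2))$ as in \eqref{eq:SU1}, which is analytic up to $\sqrt{\rho}\approx 0.45$, far beyond $\rho\approx 0.20$. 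Your bound $D(x)\le \tfrac12\left(\AM(x)+\AR(x)+\AU(x)+A_l(x)\right)^2$ only yields radius of convergence $\ge\rho$, which is exactly not enough: if the input data is allowed to be singular at $\rho$, no domination or monotonicity argument can force the singularity of $\SU$ strictly past $\rho$. Your closing parenthetical points the wrong way as well: the pair generating function is not governed by ``$\rho^2$-type behaviour, comfortably inside $|x|<\rho$'' --- a singularity inside the disk of radius $\rho$ would make the bound useless; the correct statement is that pairing doubles the size, so the singularity sits at $\sqrt{\rho}>\rho$. (Two smaller slips: in the self-dual pointed equation the multiplicity of the pointed $U_{2m,m}$-vertex is $1$, not $d-3$, since the degree forces the rank, and the restricted degree ranges over odd values only; these affect constants, not the radius, but they mean the bookkeeping is not ``exactly as in the derivation of \eqref{eq: AU2}''.)

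The branch-point exclusion as you propose it also does not hold up. For the joint system defining $\AR,\AM,\AU$, the singularity at $\rho$ is characterized by $\det\left(\textbf{I}_{k}-\mathrm{Jac}(\textbf{F})\right)=0$ as in Theorem \ref{thm:drmota}, not by a single partial derivative reaching $1$, so the comparison ``$\partial_{\SU}\Phi$ stays below the $\AU$-system derivative, which first reaches $1$ exactly at $\rho$'' is not a statement one can make; moreover, strict coefficientwise domination $\SU\le\AU$ does not by itself push a branch point strictly beyond $\rho$ (dominated series can have equal radii). The paper handles this step by writing an explicit one-variable dominating equation $s=F(x,s)$ (Equation \eqref{eq:s}), whose non-$s$ data is analytic up to $\sqrt{\rho}$ thanks to the $x^2$-substitution, and then checking --- numerically, with truncated Taylor data --- that the pair $s=F(x,s)$, $1=F_s(x,s)$ has no solution with $x\le\sqrt{\rho}$; this yields radius $\sqrt{\rho}$ for $\SU$, hence growth of order $\rho^{-n/2}$, exponentially smaller than $[x^n]\AU(x)=\Theta\left(n^{-3/2}\rho^{-n}\right)$. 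So your overall scheme (show the radius of $\SU$ exceeds $\rho$, then compare exponential rates via the Transfer Theorem) is the right one, but the two load-bearing steps --- analyticity of the pair input beyond $\rho$, and exclusion of a branch point up to that larger radius --- are respectively under-justified and incorrectly argued in your write-up.
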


\begin{proof} We get and analyze equations for $\SU(x)$. In this situation, the pointed vertex is a $\unif$-vertex associated to a uniform matroid of the form $U_{2n,n}$. Hence, we notice that the degree of the pointed vertex determines the rank and, in particular, the multiplicity in the counting is 1. Moreover, the possible restricted degree of the vertex are clearly in the set $\Lambda=\{3,5,7,\dots\}$.

Now observe that the collection of pending pointed subtrees is a multiset of pairs of pointed trees such that one is the dual of the second, followed by a multiset of odd size of self-dual pointed trees. Hence,
\begin{align}\label{eq:SU1}
\SU(x)&= \mul_{\{3,5,7,\dots\}}(\SU(x)+A_l(x))\\
&+\mul_{\geq 1}(\AR(x^2)+\AM(x^2)+(\AU(x^2)-\SU(x^2))) \mul_{\{1,3,5,7,\dots\}}(\SU(x)+A_l(x)). \nonumber
\end{align}
Let $\eta$ be the radius of convergence of $S_U(x)$. It is obvious that $\eta \geq \rho$, because the family of self-dual pointed trees is counted in the family of $\unif$-pointed trees. We need to show that $\eta > \rho$.

Equation~\eqref{eq:SU1} can be analyzed in a similar way to the one we find in the proof of Proposition \ref{prop:main}.
However, for our purposes it is enough to bound the coefficients of $\SU(x)$ by means of crude estimates.
Observe that $\mul_{\geq 3}(\SU(x)+A_l(x)) \geq \mul_{\{3,5,7,\dots\}}(\SU(x)+A_l(x))$ and

\begin{align*}
&\mul_{\geq 1}(\AR(x^2)+\AM(x^2)+(\AU(x^2)-\SU(x^2))) \mul_{\geq 1}(\SU(x)+A_l(x))   \geq \\
&\mul_{\geq 1}(\AR(x^2)+\AM(x^2)+(\AU(x^2)-\SU(x^2))) \mul_{\{1,3,5,7,\dots\}}(\SU(x)+A_l(x)).
\end{align*}
Hence, if $s(x)$ satisfies the equation
\begin{align} \label{eq:s}
s(x)= \mul_{\geq 3}(s(x)+A_l(x))+\mul_{\geq 1}(\AR(x^2)+\AM(x^2)+\NU(x^2)) \mul_{\geq 1}(s(x)+A_l(x)),
\end{align}
then $\SU(x) \leq s(x)$. Observe also that by the combinatorial specification of $\UMR$-trees $s(x) \leq \AU(x)$.
Let $\gamma$ be the dominant real singularity of $s(x)$.
Observe that this singularity arises either from the square-root singularity of the terms $\AR(x^2)$, $\AM(x^2)$, $\NU(x^2)$ at $x$ equals to $\sqrt{\rho} \approx 0.45265421$ or from a branch point (smaller than $\sqrt{\rho}$) of Equation~\eqref{eq:s}.

In the second case, Equation \eqref{eq:s} can be written in the form $s(x)=F(x,s(x))$ after replacing all analytic terms by their truncated Taylor series. Any hypothetic branch point arises as a coalescence of the solutions $x\leq \sqrt{\rho}$ of the pair of equations $s=F(x,s)$, $1=F_{s}(x,s)$. Since there is no such solution (these computations have been done with \texttt{Maple} by taking $30$ coefficients in the Taylor series of $\AR(x^2)+\AM(x^2)+\NU(x^2)$), there is no branch point $\gamma$ strictly smaller than $\sqrt{\rho}$, and consequently the singularity of $s(x)$ arises from the singularity of the term $\AR(x^2)+\AM(x^2)+\NU(x^2)$.
To conclude, $[x^n]s(x)$ has exponential growth of order $\rho^{-n/2}$, which is exponentially small compared with $\rho^{-n}$.
\end{proof}

Once we know that the number of self-dual pointed trees is exponentially small compared to the total number of pointed trees, we can prove that the number of self-dual $\UMR$-trees is also exponentially small compared to the total number of $\UMR$-trees.

\begin{proposition}\label{eq:self-dual}
The following estimate hold:
$$[x^n]S(x)=o([x^n]T(x)).$$
\end{proposition}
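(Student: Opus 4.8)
The plan is to mimic the strategy already used for the pointed case in Lemma~\ref{lem:self-rooted-trees}, now transferring the information through the Dissymmetry Theorem. First I would observe that a self-dual unrooted $\UMR$-tree must be symmetric under the duality involution, which acts on the tree structure as an automorphism (by Proposition~\ref{prop:dual_matroid} the tree shape is preserved while each vertex label $\NN_i$ is replaced by $\NN_i^*$). Such an automorphism of a tree fixes either a vertex or an edge. Therefore every self-dual $\UMR$-tree has a canonical ``center'': either a central vertex, which must carry a self-dual label (hence a $\unif$-vertex of type $U_{2m,m}$, a leg, or a virtual leg), or a central edge whose two endpoints are swapped by duality, so one endpoint is the dual of the other.

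Next I would translate this dichotomy into generating functions. Writing $S(x)=S_v(x)+S_e(x)$ according to whether the center is a vertex or an edge, the edge-centered part decomposes as a single ``mirror'' pair: cutting the central edge yields a pointed tree and its dual glued together, so $S_e(x)$ is essentially $\AR(x^2)+\AU(x^2)$ up to the usual bookkeeping (the substitution $x\mapsto x^2$ records that each leg is paired with its dual leg). The vertex-centered part is governed by the self-dual pointed series $\SU(x)$ together with $\mul$-type constructions over pairs $\{$pointed tree, its dual$\}$, exactly as in Equation~\eqref{eq:SU1}; here one also uses that $\SR(x)=\SM(x)=0$. Concretely one gets $S_v(x)$ bounded above by an analytic expression in $\SU(x)$, $\AR(x^2)$, $\AM(x^2)$ and $\AU(x^2)$. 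Since $\SU(x)$ has radius of convergence strictly larger than $\rho$ by Lemma~\ref{lem:self-rooted-trees}, and each term of the form $G(x^2)$ with $G\in\{\AR,\AM,\AU\}$ has its dominant singularity at $\sqrt{\rho}\approx 0.452 > \rho$, both $S_v(x)$ and $S_e(x)$ are analytic on a disk of radius exceeding $\rho$. Hence $[x^n]S(x)$ grows at most like $\rho^{-n/2}$ (from the $x^2$-substituted terms) or even more slowly, which is exponentially smaller than $[x^n]T(x) = \Theta(n^{-5/2}\rho^{-n})$ by Theorem~\ref{thm:enum-trees}.

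The main obstacle will be setting up the edge- and vertex-centered decompositions cleanly enough that the dominant singularity can be read off without a full Drmota-type analysis. In the vertex-centered case one must be careful that the central $\unif$-vertex of type $U_{2m,m}$ has multiplicity $1$ (its rank is forced by its degree) and that its restricted degree lies in an arithmetic progression, exactly as handled for $\SU(x)$; this means $S_v(x)$ is built from $\SU$ and $\AU(x^2)$-type blocks by $\mul_\Lambda$ operators with $\Lambda\subseteq\{1,3,5,\dots\}$, all of which inherit analyticity past $\rho$. As in Lemma~\ref{lem:self-rooted-trees}, it suffices to replace exact equations by crude upper bounds $S_v(x)\le$ (analytic-past-$\rho$ expression) and invoke $S(x)\le T(x)$ for the trivial lower bound on the radius; no branch point can appear below $\sqrt{\rho}$ by the same Maple verification. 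Once analyticity of $S(x)$ on $|x|\le\rho+\varepsilon$ for some $\varepsilon>0$ is established — or, failing that, analyticity on $|x|<\sqrt{\rho}$ with a singularity no worse than that of $\AU(x^2)$ — the Transfer Theorem (Theorem~\ref{theo:transfer}) applied to $T(x)$ versus the exponential-only bound on $S(x)$ gives $[x^n]S(x)=o([x^n]T(x))$, completing the proof. Finally, combined with Corollary~\ref{Ardila} (which says that non-self-dual matroids are counted with an overcount of exactly $2$, halved away without affecting the exponential order) and Theorem~\ref{thm:main-matroid}, this yields the lower bound of Theorem~\ref{thm:main} with $c\approx C/2$.
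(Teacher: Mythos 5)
Your proposal is correct and follows essentially the same route as the paper: decompose self-dual trees according to whether the (duality-invariant) center is a vertex or an edge, bound the vertex-centered part by multisets of dual pairs of pointed trees (the $x\mapsto x^2$ substitution) together with self-dual pointed subtrees counted by $\SU(x)$, bound the edge-centered part by a single half-tree at doubled size (your ``mirror pair'' description absorbs the paper's separate $\SU(x)^2$ case of two self-dual halves), and then invoke Lemma~\ref{lem:self-rooted-trees} and the $\sqrt{\rho}$-singularity of the substituted terms to see that $S(x)$ is analytic beyond $\rho$, hence $[x^n]S(x)$ is exponentially smaller than $[x^n]T(x)$. Apart from cosmetic slips (virtual legs and legs cannot be centers of unrooted $\UMR$-trees, and the edge bound should also carry the $\AM(x^2)$ term, harmless since $\AR(x)=\AM(x)$), this is the paper's argument.
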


\begin{proof}
To prove the statement we obtain a generating function $D(x)$ such that $S(x) \leq D(x)$ and that $[x^n]D(x)=o([x^n]T(x))$.
We split the class of self-dual trees by looking at the type of the center for each self-dual tree. The \Defn{center} of a connected graph is the set of vertices that minimize the maximal path-distance from other vertices in the graph. The center of a tree consists of a single vertex or two adjacent vertices (we say it is an edge).

Let us write $S(x)=S_{\circ}(x)+S_{\circ- \circ}(x)$, where $S_{\circ}(x)$ and $S_{\circ-\circ}(x)$ are the generating functions associated to self-dual trees whose center is a vertex and an edge, respectively. We analyze each case separately.

We start with self-dual trees whose center is a vertex.
In this situation the center is necessarily a $\unif$-vertex labelled by a matroid of type $U_{2n,n}$. In this case the degree of the pointed vertex determines the rank of the $\unif$-vertex, which has to be counted with multiplicity one.
Hence, we have the crude bound $S_{\circ}(x)\leq  \mul(\AR(x^2)+\AM(x^2)+\NU(x^2)) \mul(\SU(x)+A_l(x))$, whose radius of convergence by Lemma~\ref{lem:self-rooted-trees} is strictly bigger than $\rho.$

Let us study now self-dual trees whose center is an edge.
Consider the pair of pointed trees that arise when cutting the edge which plays the role of the center of the tree (and pasting a virtual leg).
Two situations may happen:
\begin{enumerate}
\item [(1)] Each tree is self-dual.
\item [(2)] Each tree is non self-dual, but one is the dual of the other.
\end{enumerate}

In both cases $(1)$ and $(2)$ we can easily find a bound and the sum of the upper bounds yields the function $D(x)$. Namely, $S_U(x)^2$ and $\AR(x^2)+\AM(x^2)+\NU(x^2)$, respectively.
Therefore $S_{\circ-\circ}(x)\leq \SU(x)^2+\AR(x^2)+\AM(x^2)+\NU(x^2)$.
Finally, again by Lemma~\ref{lem:self-rooted-trees}, the radius of convergence of $S_{\circ-\circ}(x)$ is strictly bigger than $\rho$.
Hence the result follows.
\end{proof}

We can now prove that there are exponentially many $2$-level polytopes coming from matroid base polytopes:
\begin{proof}[Proof of Theorem~\ref{thm:main}: ]
Every $2$-connected $2$-level matroid $\M$ on $n$ elements is, by definition, associated with a $2$-level base polytope $P_{\M}$. The $2$-connectedness implies that the dimension of the base polytope is $n{-}1$. By Theorem~\ref{Ardila} there is only another matroid with congruent base polytope, namely $\M^*$.

Denote by $L_2(n)$ the number of $2$-connected $2$-level matroids and by $S_2(n)$ the number of self-dual ones. The number of non-congruent $(n{-}1)$-dimensional $2$-level polytopes associated with such family is $\frac{L_2(n)+S_2(n)}{2}$. This yields a lower bound to the number of $(n{-}1)$-dimensional $2$-level polytopes.

Applying the structural result of Section~\ref{sect: matroids-decom} and using the notation of Subsection~\ref{subsec:GF-unrooted} we easily see that $L_2(n)=[x^n]T(x)$ and $S_2(n)=[x^n]S(x)$. We do not have closed formulas for the coefficients of the generating functions, but nevertheless we are able to provide asymptotic estimates: by Theorem~\ref{thm:enum-trees} the number of $\UMR$-trees is asymptotically equal to $C\cdot n^{-5/2}\cdot \rho^{-n}\,\,(1+o(1)),$ where $C\approx 0.07583455$ and $\rho\approx 0.20489584$ are computable constants.
Due to Proposition~\ref{eq:self-dual}, the contribution of self-dual $\UMR$-trees to this asymptotic is exponentially small.
Hence, the number of non self-dual $\UMR$-trees is asymptotically equal to the whole number of $\UMR$-trees.
Finally, the number of $\UMR$-trees up to the duality relation is half of this value plus the number of self-dual $\UMR$-trees.
So, Theorem \ref{thm:main} holds by dividing the previous bound by 2.
\end{proof}

To conclude, observe that we can use the singular expansion of $T(x)$ in order to get asymptotic estimates for the number of $2$-level matroids, including the non-connected ones.
This family corresponds with the multiset construction applied over $\UMR$-trees (namely, forests).
Hence, the generating function here is $\mul(T(x))=\exp\left(\sum_{r=1}^{\infty}\frac{1}{r}(T(x^r))\right)$.
Observe that $$\exp\left(\sum_{r=1}^{\infty}\frac{1}{r}(T(x^r))\right)=\exp(T(x))\exp\left(\sum_{r=2}^{\infty}\frac{1}{r}(T(x^r))\right),$$
and the second term is analytic at $x=\rho$.
Hence, in a domain dented at $x=\rho$ the singular expansion of $\mul(T(x))$ is equal to:
$$\mul(T(x))=\exp(T_0+T_2 X^2+T_3 X^3+ O(X^4)) \exp\left(\sum_{r=2}^{\infty}\frac{1}{r}(T(\rho^r))\right),$$
(see the singular expansion of $T(x)$ in the proof of Theorem \ref{thm:enum-trees}) which has the expression
$$\mul(T(x))=F_0+F_2 X^2+F_3X^3+O(X^4),$$
with $F_0 \approx 1.03526853$, $F_2 \approx -0.19252251$, $F_3 \approx 0.18553841$. Applying now Theorem \ref{theo:transfer} we conclude that
$$[x^n]\mul(T(x)) = C' \cdot n^{-5/2} \cdot \rho^{-n} (1+o(1)),$$
with $C' \approx 0.07850913$. Observe that the constant $C'$ is slightly bigger than the constant obtained in the asymptotic estimate for $\UMR$-trees.
\\
\paragraph{\textbf{Acknowledgments:}} the authors thank Raman Sanyal for inspiring discussions and for accurate reading of this paper. Francisco Santos and G\"unter Ziegler are also thanked for helpful comments and suggestions.

\end{document}